\newcommand{\cq}{\mathcal{Q}}
\newcommand{\cp}{\mathcal{P}}
\newtheorem{thm}{Theorem}[section]
\newtheorem{lmm}[thm]{Lemma}
\newtheorem{defn}[thm]{Definition}
\theoremstyle{definition}
\newtheorem{remark}[thm]{Remark}
\newtheorem{ex}[thm]{Example}
\newcommand{\argmin}{\operatorname{argmin}}
\newcommand{\ee}{\mathbb{E}}
\newcommand{\pp}{\mathbb{P}}
\newcommand{\rr}{\mathbb{R}}
\newcommand{\tr}{\operatorname{Tr}}
\newcommand{\var}{\mathrm{Var}}
\newcommand{\ve}{\varepsilon}
\newcommand{\N}{\mathbb{N}} 
\newcommand{\fpar}[2]{\frac{\partial #1}{\partial #2}}
\newcommand{\rank}{\operatorname{rank}}
\def\thickhline{%
	\noalign{\ifnum0=`}\fi\hrule \@height \thickarrayrulewidth \futurelet
	\reserved@a\@xthickhline}
\def\@xthickhline{\ifx\reserved@a\thickhline
	\vskip\doublerulesep
	\vskip-\thickarrayrulewidth
	\fi
	\ifnum0=`{\fi}}
\newlength{\thickarrayrulewidth}
\numberwithin{equation}{section}
\renewcommand{\hat}{\widehat}
\renewcommand{\tilde}{\widetilde}
\begin{document}
\title[Matrix completion]{Matrix completion with data-dependent missingness probabilities}

\author[S. Bhattacharya]{Sohom Bhattacharya}
\address{S. \ Bhattacharya\hfill\break
	Department of Statistics\\ Stanford University\\ California, CA 94305, USA.}
\email{sohomb@stanford.edu}
\author[S. Chatterjee]{Sourav Chatterjee}
\address{S. \ Chatterjee\hfill\break
	Departments of Mathematics and Statistics\\ Stanford University\\ California, CA 94305, USA.}
\email{souravc@stanford.edu}

\begin{abstract}
The problem of completing a large matrix with lots of missing entries has received widespread attention in the last couple of decades. Two popular approaches to the matrix completion problem are based on singular value thresholding and nuclear norm minimization. Most of the past works on this subject assume that there is a single number $p$ such that each entry of the matrix is available independently with probability $p$ and missing otherwise. This assumption may  not be realistic for many applications. In this work, we replace it with the assumption that  the probability that an entry is available is an unknown function $f$ of the entry itself. For example, if the entry is the rating given to a movie by a viewer, then it seems plausible that high value entries have greater probability of being available than low value entries. We propose two new estimators, based on singular value thresholding and nuclear norm minimization, to recover the matrix under this assumption. The estimators involve no tuning parameters, and are shown to be consistent under a low rank assumption. We also provide a consistent estimator of the unknown function $f$.
\end{abstract}

\maketitle

\section{Introduction}
Let $M$ be an $m \times n$ matrix, which is only partially observed, possibly with added noise. Given an estimate $\hat{M}$ of  $M$, we define its mean squared error~as
\begin{equation}\label{eq:mse}
	\text{MSE}(\hat{M}) := \ee \biggl[\frac{1}{mn}\sum\limits_{i=1}^{m}\sum\limits_{j=1}^{n}\left(\hat{m}_{ij}-m_{ij}\right)^2\biggr],
\end{equation}
where $m_{ij}$ and $\hat{m}_{ij}$ denote the $(i,j)$-th entries of $M$ and $\hat{M}$ respectively. Given a sequence of such estimation problems, where $M_k$ and $\hat{M}_k$ denote the parameter and estimator matrices of the $k$-th problem, we call the sequence of estimators $\hat{M}_k$ consistent if 
\[
\lim\limits_{k \rightarrow \infty} \text{MSE}(\hat{M}_k)=0.
\]
Estimating a large matrix from a few randomly selected (and possibly noisy) entries is a common objective in many statistical problems. The basic assumption in all of the work in this area is that the matrix has either low rank or is approximately of low rank in some suitable sense. Some of the prominent applications of matrix completion include compressed sensing~\cite{cand1,don1, cand2,cand3,cand4,cand5}, collaborative filtering~\cite{bill,ren}, multi-class learning~\cite{mcl1,mcl2}, dimension reduction~\cite{dim1,dim2} and subspace estimation~\cite{yuxin1}. Theoretical guarantees of matrix completion under various assumptions have been worked out in \cite{am1, davplan, don2, eng1, eng2, kol1, kol2, klopp1, klopp2, klopp3, kmo, mjw, mona, rohde,rm}. This is only a small sampling of the huge literature on this topic. For a recent survey, see~\cite{nks}.  

In many of the above works, it is assumed that the entries are missing uniformly at random. This may not be a realistic assumption in many applications. For example, in the classic problem of movie ratings, if a particular movie gets poor reviews, fewer numbers of viewers are expected to review it and hence the probability of missing entries corresponding to that particular movie would be higher. Work on matrix completion under the `missing not at random' (MNAR) assumption is relatively sparse. Some examples include deterministic missing patterns or missing patterns that depend on the matrix, using spectral gap conditions \cite{bhj}, rigidity theory \cite{sin}, algebraic geometry \cite{kiraly} and other methods \cite{lee,pim,sportisse}. For random but non-uniform missing patterns, a variety of statistical guarantees for procedures based on nuclear-norm penalization and other ideas are available~\cite{klopp3,foygel,kol2, cand3,rohde,mjw, don2, davplan, zhupca,choasym, mamnar}.  However, these guarantees almost always require a careful choice of the penalty parameter (or some other parameter, such as rank) based on knowledge about the unknown matrix that is unlikely to be available. This is in contrast to the case of uniform missing pattern, where we now have many algorithms that assume no knowledge of the unknown matrix.


In the present work, we assume that the probability of an entry being revealed is a function $f$ of the value of that entry, and the revealed entries are allowed to be noisy. This frequently encountered example of missingness where a variable governs its own missingness is known as {\it self-masking MNAR}~\cite{mohan2018estimation}. Under these assumptions, we provide an estimator of the parameter matrix based on a spectral method and prove its consistency under a low rank assumption. We also provide a second estimator based on nuclear norm minimization. This estimator performs significantly better than the spectral estimator in the absence of noise, but may not work well for noisy entries. Moreover, it is computationally expensive for large matrices. Lastly, we give estimates of the function $f$ using both methods, along with theoretical guarantees about it. Some numerical examples are worked out. The main advantage of our estimators is that they do not involve penalty parameters (or any other user-specified parameters) which have to be carefully chosen to ensure that the theoretical guarantees work out.  The cost is that we have asymptotic consistency results rather than finite sample error bounds.

A recent paper that works under the setting of self-masking MNAR, but in the setting of tensor completion, is~\cite{yang2021tenips}. In \cite{yang2021tenips}, the probabilities of missingness are called `propensity scores'. The main difference between \cite{yang2021tenips} (and similar papers) and our work is that in \cite{yang2021tenips}, it is assumed that the tensor of propensity scores is low-rank, while we make no such assumption. Indeed, one of the main observations in our paper, which we prove using spectral techniques reminiscent of the proof of Szemer\'edi's lemma in combinatorics, is that the matrix of propensity scores is guaranteed to have an approximately low rank structure under a Lipschitz assumption on $f$.

A natural extension of our work is to study beyond self-masking MNAR, namely, to consider examples where the process that causes the missingness
of an entry depends on multiple entries of the parameter matrix and not
only its value itself. Such directions are left for future research.

\section{Results}
\subsection{The problem}\label{setupsec}
Let $M$ be an $m \times n$ matrix with all entries in the interval $[-1,1]$. Let $f:[-1,1] \rightarrow [0,1]$ be a function. Let $X$ be a noisy version of $M$, modeled as a matrix with independent entries in $[-1,1]$, such that $\ee(x_{ij})=m_{ij}$ for each $i$ and $j$. The $(i,j)$-th entry of $X$ is revealed with probability $f(m_{ij})$, and remains hidden with probability $1-f(m_{ij})$, and these events occur independently. Our goal is to estimate $M$ using the observed entries of $X$. 





\subsection{Modified USVT estimator}
Our first proposal is an estimator of $M$ based on singular value thresholding. This is a modification of the Universal Singular Value Thresholding (USVT) estimator of~\cite{usvt}. The estimator is defined as follows: 
\begin{enumerate}
	\item Let $Y$ be the matrix whose $(i,j)$-th entry is $x_{ij}$ if the $(i,j)$-th entry of $X$ is revealed, and $0$ otherwise.
	\item Let $\sum \sigma_iu_iv^T_i$ be the singular value decomposition of $Y$. 
	\item Choose a positive number $\eta \in(0,1)$ and let  
	\[
	A = \sum_{i\, :\, \sigma_i \geq (2+\eta)\max\{\sqrt{m}, \sqrt{n}\}}\sigma_iu_iv^T_i.
	\]
	(In \cite{usvt}, it is recommended that $\eta$ be chosen to be $0.02$. For results concerning the optimal choice of the threshold, see \cite{don2}.)
	\item Truncate the entries of $A$ to force them to belong to the interval $[-1,1]$. Call the resulting  matrix $\hat{Q}$. 
	\item Let $P$ be the matrix whose $(i,j)$-th entry is $1$ if $x_{ij}$ is revealed, and $0$ otherwise.
	\item Repeat the above steps for the matrix $P$ instead of $Y$, to get $\hat{R}$.
	\item Define a matrix $W$ as $w_{ij} := \hat{q}_{ij}/\hat{r}_{ij}$ if $\hat{r}_{ij} \neq 0$, and $0$ otherwise. 
	\item Truncate the entries of $W$ to force them to be in $[-1,1]$. The resulting matrix is our estimator $\hat{M}$.
\end{enumerate}
The idea behind this estimator has some similarity with the one proposed recently by Ma and Chen~\cite{mamnar}, which is also based on a two-step procedure, first estimating the matrix of missingness probabilities and then using these estimated probabilities to estimate the unknown matrix. The algorithm of Ma and Chen involves a number of user-specified parameters, whereas ours does not, which may be a desirable feature.

Note that if the entries of $M$ and $X$ are known to belong to an interval $[a, b]$ instead of $[-1, 1]$, then subtracting  $(a + b)/2$ from each entry of X and dividing by $(b-a)/2$ forces the entries to lie in $[-1, 1]$. Then applying the above procedure, and finally multiplying the end-result by $(b- a)/2$ and adding $(a + b)/2$, we can get the desired estimate of $M$. The case of unknown $a,b$ is beyond the scope of the paper. Lastly, if $n>m$, one can simply work with the transpose of $X$ to get an estimate for the transpose of $M$.

\subsection{Modified Cand\`es--Recht estimator} 
Our second proposal is an estimator of $M$ based on nuclear norm minimization. This estimator works only in the absence of noise, so we assume that $X=M$. Let $\hat{M}$ be the matrix  that minimizes nuclear norm among all matrices that are equal to $M$ at the revealed entries, and have all entries in $[-1,1]$. (Recall that the nuclear norm of a matrix $M$, usually denoted by $\|M\|_*$, is the sum of its singular values.) Hence, given a set of observed entries $\Omega$, our estimator is obtained by solving the optimization problem:
\[
\hat{M}:= \argmin_{Z\in S} \|Z\|_*,
\]
where
\[
S := \{Z: (Z-M)_{ij}\mathbbm{1}_{(i,j)\in \Omega}=0, \, \|Z\|_\infty \le 1\}.
\]
This is a small modification of the popular Cand\`es--Recht estimator \cite{cand3, cand4, cand5}, suggested recently in \cite{c19}. The original estimator does not have the additional constraint that the entries of $\hat{M}$ have to be in $[-1,1]$. This extra constraint is not problematic since this is a convex constraint. For example, it can be easily implemented in R by adding an $\ell^\infty$ constraint using {\tt CVXR} package~\cite{cvxr2020}. Moreover, from an intuitive point of view, it makes sense to add this constraint since we already know that the entries of the unknown matrix $M$ are in $[-1,1]$. This estimator is similar to the one proposed by Klopp~\cite{klopp3}, except that our method does not involve a penalty parameter.  


\subsection{Consistency results}\label{consec}
We now state consistency results for the two estimators defined above. Suppose that we have a sequence of matrices $\{M_k\}_{k\ge 1}$, where $M_k$ has order $m_k\times n_k$, and $m_k,n_k\to \infty$ as $k\to \infty$. Let $\{X_k\}_{k\ge 1}$ be a sequence of random matrices with independent entries in $[-1,1]$ such that $\ee(X_k)=M_k$ for each $k$. In other words, $X_k$ is a noisy version of $M_k$. Let $\mathcal{M}$ be the union of the sets of entries of all of these matrices. Let $f:\mathcal{M}\to [0,1]$ be a function such that the noisy version of an entry with true value $m$ is revealed with probability $f(m)$, independently of all else. Note that it is irrelevant how $f$ is defined outside $\mathcal{M}$, which is why we took the domain of $f$ to be this countable set. 

Recall that a sequence of estimators $\{\hat{M}_k\}_{k\ge 1}$ is consistent if $\text{MSE}(\hat{M}_k) \to 0$ as $k\to \infty$, where $\text{MSE}$ stands for the mean squared error defined in equation \eqref{eq:mse}. We will now prove the consistencies of the two estimators defined above. The crucial assumption will be that the sequence $\{M_k\}_{k\ge 1}$ has {\it uniformly bounded rank}. This is a version of the frequently occurring {\it low rank assumption} from the literature. In addition to that, we will need some other technical assumptions. Our first result is the following theorem, which gives a sufficient condition for the consistency of the modified USVT estimator.
\begin{thm}\label{usvtthm}
In the above setup, suppose that the sequence $\{M_k\}_{k\ge 1}$ has uniformly bounded rank. Let $\mu_k$ be the empirical distribution of the entries of $M_k$. Suppose that for any subsequential weak limit $\mu$ of the sequence $\{\mu_k\}_{k\ge 1}$, there is an extension of $f$ to a Lipschitz function from $[-1,1]$ into $[0,1]$, also denoted by $f$, which has no zeros in the support of $\mu$. Then the modified USVT estimator based on $\{X_k\}_{k\ge 1}$ is consistent. 
\end{thm}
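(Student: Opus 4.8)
The plan is to exploit the observation that the two thresholding steps estimate two different entrywise transforms of $M$ whose ratio recovers $M$ exactly. Set $g(x):=xf(x)$. Since the revealing indicator of entry $(i,j)$ is independent of the noise $x_{ij}$ and has mean $f(m_{ij})$, the entries of $Y$ are independent, lie in $[-1,1]$, and satisfy $\ee(y_{ij})=f(m_{ij})m_{ij}=g(m_{ij})$, while the entries of $P$ are independent and satisfy $\ee(p_{ij})=f(m_{ij})$. Thus $\hat{Q}$ and $\hat{R}$ are USVT estimates of the mean matrices $g(M)$ and $f(M)$ (with $g,f$ applied entrywise). Because $f$, and hence $g$, is Lipschitz, the first step is to show that these mean matrices are approximately low rank, so that the USVT consistency theorem of \cite{usvt} yields $\tfrac{1}{m_kn_k}\|\hat{Q}_k-g(M_k)\|_F^2\to0$ and $\tfrac{1}{m_kn_k}\|\hat{R}_k-f(M_k)\|_F^2\to0$.

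For the approximate low-rank claim, fix $\ep>0$ and let $h$ be either $f$ or $g$. By a standard polynomial approximation (Jackson's theorem), there is a polynomial $\pi$ of degree $d=d(\ep)$ with $\sup_{[-1,1]}|h-\pi|\le\ep$. If $M_k$ has rank at most $r$, then expanding the entries as $m_{ij}=\sum_{l\le r}a_{il}b_{jl}$ shows that the matrix with entries $m_{ij}^p$ has rank at most $\binom{p+r-1}{r-1}$; hence $\pi(M_k)$ (applied entrywise) has rank at most $\binom{d+r}{r}$, a bound independent of $k$. Since $h(M_k)$ differs from $\pi(M_k)$ by at most $\ep$ in every entry, $h(M_k)$ lies within normalized Frobenius distance $\ep$ of a matrix of uniformly bounded rank. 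Feeding this into the USVT error bound gives $\limsup_k\tfrac{1}{m_kn_k}\|\hat{Q}_k-g(M_k)\|_F^2\le C\ep^2$, and similarly for $f$; letting $\ep\to0$ gives the two displayed limits.

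The heart of the argument, and the main obstacle, is the division step, where a small denominator $\hat{r}_{ij}$ can amplify errors; this is precisely what the no-zeros hypothesis controls. I would argue by subsequences: since the entries lie in the compact set $[-1,1]$, every subsequence of $\{\mu_k\}$ has a further subsequence with $\mu_k\to\mu$ weakly, and it suffices to prove $\text{MSE}(\hat{M}_k)\to0$ along each such sub-subsequence. By hypothesis the Lipschitz extension of $f$ has no zeros on the compact set $\operatorname{supp}(\mu)$, so $f\ge\delta$ there for some $\delta>0$, and by continuity $f\ge\delta/2$ on an open neighborhood $U\supseteq\operatorname{supp}(\mu)$. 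The portmanteau theorem forces the fraction of entries with $m_{ij}\notin U$ to vanish, and since every entry of the final (truncated) estimator and of $M_k$ lies in $[-1,1]$, each such entry contributes at most $4$ to the sum in \eqref{eq:mse}, so their total contribution is $o(1)$.

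It remains to control the entries with $m_{ij}\in U$, i.e.\ $f(m_{ij})\ge\delta/2$. Call such an index \emph{bad} if in addition $\hat{r}_{ij}<\delta/4$; then $|\hat{r}_{ij}-f(m_{ij})|>\delta/4$, so the number of bad indices is at most $16\delta^{-2}\|\hat{R}_k-f(M_k)\|_F^2=o(m_kn_k)$, and each again contributes at most $4$. For the remaining \emph{good} indices, where $\hat{r}_{ij}\ge\delta/4$, I write
\[
w_{ij}-m_{ij}=\frac{\hat{q}_{ij}-m_{ij}\hat{r}_{ij}}{\hat{r}_{ij}},\qquad \hat{q}_{ij}-m_{ij}\hat{r}_{ij}=\bigl(\hat{q}_{ij}-g(m_{ij})\bigr)+m_{ij}\bigl(f(m_{ij})-\hat{r}_{ij}\bigr),
\]
so that $|w_{ij}-m_{ij}|\le\tfrac{4}{\delta}\bigl(|\hat{q}_{ij}-g(m_{ij})|+|\hat{r}_{ij}-f(m_{ij})|\bigr)$, using $|m_{ij}|\le1$. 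Squaring, summing, and using that the final truncation to $[-1,1]$ can only decrease the distance to $m_{ij}$, the good-index contribution is bounded by a constant times $\tfrac{1}{m_kn_k}\bigl(\|\hat{Q}_k-g(M_k)\|_F^2+\|\hat{R}_k-f(M_k)\|_F^2\bigr)\to0$. Adding the three groups of indices gives $\text{MSE}(\hat{M}_k)\to0$ along the sub-subsequence, which completes the proof.
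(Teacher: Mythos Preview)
Your argument is correct and tracks the paper's two-stage outline: first show that the USVT outputs $\hat{Q}_k,\hat{R}_k$ are consistent for the entrywise transforms $g(M_k),f(M_k)$, then control the division step via a subsequence argument and the no-zeros hypothesis. The division step in particular is essentially identical to the paper's (the paper uses the closed set $\{f\le\delta\}$ and portmanteau for closed sets where you use an open neighbourhood $U$ and portmanteau for open sets, and writes the algebraic identity for $w_{ij}-m_{ij}$ in the equivalent form $\frac{|\hat q-q|}{\hat r}+\frac{|q||\hat r-r|}{\hat r\,r}$).

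Where you genuinely diverge is the mechanism for the approximate-low-rank step. The paper invokes a block-matrix regularity lemma from \cite{c19}: since $M_k$ has rank at most $r$, it is close in normalized Frobenius norm to a block matrix $B_k$ with a bounded number of blocks; applying $f$ entrywise preserves the block structure, so $f(B_k)$ also has bounded rank, and the Lipschitz property transfers the approximation to $f(M_k)$. You instead approximate the Lipschitz scalar function $h$ uniformly by a polynomial $\pi$ and use the purely algebraic fact that the entrywise $p$th power of a rank-$r$ matrix has rank at most $\binom{p+r-1}{r-1}$, so $\pi(M_k)$ has rank at most $\binom{d+r}{r}$ independent of $k$. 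Your route is more self-contained (no dependence on \cite{c19}) and arguably more elementary; the paper's block-matrix argument meshes naturally with the machinery already built for Theorem~\ref{candesthm}. Both deliver $\|h(M_k)\|_*=o(m_k\sqrt{n_k})$, which is what \cite[Theorem~1.1]{usvt} needs.

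Two small remarks. First, the USVT error bound is linear in the nuclear-norm ratio, so your $\limsup\le C\ep^2$ should read $C\ep$; this is harmless since $\ep\to0$ anyway. Second, you should fix a single Lipschitz extension of $f$ (any one guaranteed by the hypothesis suffices) before running the polynomial approximation, so that the whole first stage is done uniformly in $k$ prior to the subsequence argument; this is implicit in what you wrote but worth making explicit.
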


\begin{remark}
	The statement of the above Theorem is about asymptotic behavior of MSE. However, in our proofs, we obtain some finite sample error bounds which we have omitted, with the goal of increasing the readability of the result, and also for reducing the stringency of assumptions on $f$. In fact, the proof shows that if $\|M\|_{*}\le q \sqrt{mn}$ for some $q>0$, and $f\ge \delta$ everywhere for some $\delta>0$, and $f$ is a Lipschitz function with Lipschitz constant $L>0$, then for any $\varepsilon>0$, the MSE can be upper bounded by $$\frac{12}{\delta^2} \left(c_1 \min\biggl\{2\sqrt{\frac{r}{m}}+\varepsilon L+\sqrt{2\varepsilon (L+1)},\, 2\biggr\}+2c_2e^{-c_3n}\right),$$ where $r$ is a constant depending on $q$ and $\varepsilon$, and $c_1$, $c_2$, and $c_3$ are universal constants. Such a bound reveals how the magnitude of error is dependent on the nuclear norm of parameter matrix and the Lipschitz constant of $f$. 
\end{remark}
\begin{remark}
Note that in many examples, such as in most recommender systems, the matrix entries can only take values in a fixed finite set. In such examples, there is no loss of generality in the assumption that $f$ has an extension that is Lipschitz and nonzero everywhere on $[-1,1]$. Also, if $f$ is continuous and nonzero everywhere in $[-1,1]$, then the condition involving the empirical distribution of the entries is redundant.
\end{remark} 

The next theorem gives the consistency of the modified Cand\`es--Recht estimator, under the additional assumption that there is no noise. 
\begin{thm}\label{candesthm}
	In the above setup, suppose that the sequence $\{M_k\}_{k\ge 1}$ has uniformly bounded rank, and also suppose that $X_k=M_k$ for each $k$.   Let $\mu_k$ be the empirical distribution of the entries of $M_k$. Suppose that for any subsequential weak limit $\mu$ of the sequence $\{\mu_k\}_{k\ge 1}$, there is an extension of $f$ to a measurable function from $[-1,1]$ into $[0,1]$, also denoted by $f$, such that $f$ is nonzero and continuous almost everywhere with respect to $\mu$. Then the modified Cand\`es--Recht estimator is consistent for this problem.
\end{thm}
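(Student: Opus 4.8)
The plan is to argue by contradiction through subsequences, exactly as one expects to do for Theorem \ref{usvtthm}. Suppose the modified Cand\`es--Recht estimator were not consistent. Then there is a $\delta>0$ and a subsequence along which $\mathrm{MSE}(\hat M_k)\ge\delta$. Since the $\mu_k$ are probability measures on the compact set $[-1,1]$ they are tight, so I can pass to a further subsequence along which $\mu_k\to\mu$ weakly. By hypothesis, on this limit $\mu$ the function $f$ admits a measurable extension to $[-1,1]$, still written $f$, that is nonzero and continuous at $\mu$-almost every point. The goal is then to show $\mathrm{MSE}(\hat M_k)\to0$ along this sub-subsequence, contradicting the lower bound $\delta$.

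The heart of the matter is a deterministic error bound for the estimator, of the kind established in \cite{c19}. Write $H := \hat M_k - M_k$. Because both matrices agree on the revealed entries, $H$ is supported on the unobserved set, and since both have entries in $[-1,1]$ we have $\|H\|_\infty\le 2$. Feasibility of $M_k$ for the nuclear-norm program gives $\|\hat M_k\|_*\le\|M_k\|_*$; feeding this into the standard subgradient inequality for $\|\cdot\|_*$ at $M_k$ and using the uniform rank bound, $\rank(M_k)\le r$, yields, after splitting $H$ along the tangent space of $M_k$, the \emph{effective low-rank} bound
\[
\|H\|_* \le 3\sqrt{r}\,\|H\|_F .
\]
Let $B$ be the $0/1$ matrix indicating revealed entries and let $p_{ij}:=f(m_{ij})$, so that $D:=B-p$ has independent, mean-zero entries bounded by $1$. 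Since $H$ vanishes on the revealed set, a one-line computation gives the identity $\sum_{ij}p_{ij}H_{ij}^2 = -\langle D,\,H\circ H\rangle$, where $\circ$ is the entrywise product.

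The main obstacle is the quadratic-form estimate this identity calls for: I expect to bound $\sup|\langle D,\Delta\circ\Delta\rangle|$ uniformly over the cone $\{\Delta:\|\Delta\|_\infty\le 2,\ \|\Delta\|_*\le 3\sqrt r\|\Delta\|_F\}$, normalized by $\|\Delta\|_F$, by a quantity of order $\sqrt{r}\,\|D\|_{\mathrm{op}}$. This is a restricted-strong-convexity--type bound; the natural route is symmetrization followed by the Ledoux--Talagrand contraction principle (the map $t\mapsto t^2$ is Lipschitz on $[-2,2]$), reducing matters to $\sup\langle D,\Delta\rangle\le\|D\|_{\mathrm{op}}\|\Delta\|_*$, together with a peeling argument over the scale of $\|\Delta\|_F$. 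Granting this and introducing a threshold $\epsilon>0$, split the index set into $G_\epsilon=\{(i,j):f(m_{ij})\ge\epsilon\}$ and its complement. Then $\sum_{ij}p_{ij}H_{ij}^2\ge \epsilon\|H\|_F^2-4\epsilon|G_\epsilon^c|$, and combining this with the quadratic-form estimate and solving the resulting quadratic inequality in $\|H\|_F$ gives
\[
\frac{1}{m_kn_k}\|H\|_F^2 \;\lesssim\; \frac{r\,\|D\|_{\mathrm{op}}^2}{\epsilon^2\,m_kn_k} \;+\; \frac{|G_\epsilon^c|}{m_kn_k}.
\]

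To finish, take expectations (the left side is at most $4$, so this is legitimate) and use that $D$ has independent mean-zero bounded entries, whence $\ee\|D\|_{\mathrm{op}}^2\lesssim m_k+n_k$ by standard spectral-norm bounds. The first term is then $\lesssim r(m_k+n_k)/(\epsilon^2 m_kn_k)\to 0$ as $k\to\infty$ for each fixed $\epsilon$, since $r$ is bounded and $m_k,n_k\to\infty$; the second term equals $\mu_k(\{m:f(m)<\epsilon\})$. Here the measure-theoretic input enters: for all but countably many $\epsilon$ one has $\mu(\{f=\epsilon\})=0$, and since $\partial\{f<\epsilon\}$ is contained in the discontinuity set of $f$ together with $\{f=\epsilon\}$, the $\mu$-a.e. continuity of $f$ forces $\mu(\partial\{f<\epsilon\})=0$, so the portmanteau theorem gives $\limsup_k\mu_k(\{f<\epsilon\})\le\mu(\{f<\epsilon\})$. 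Letting $\epsilon\downarrow0$ and using that $f$ is nonzero $\mu$-a.e. forces $\mu(\{f<\epsilon\})\to\mu(\{f=0\})=0$. Thus $\limsup_k\mathrm{MSE}(\hat M_k)=0$ along the sub-subsequence, the desired contradiction. The two places I expect to need the most care are the uniform quadratic-form bound over the effective-low-rank cone and the interchange of the limits $k\to\infty$ and $\epsilon\to0$, the latter justified through the portmanteau theorem and the $\mu$-a.e. continuity and nonvanishing of $f$.
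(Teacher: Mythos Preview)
Your outer structure (contradiction, pass to a subsequence where $\mu_k\Rightarrow\mu$) matches the paper, but the inner argument is genuinely different. The paper does not touch restricted strong convexity at all: it first proves a compactness lemma (any uniformly-bounded-rank sequence $M_k$ has, after row/column permutations, an $L^2$ limit $W:[0,1]^2\to[-1,1]$), then shows that the revelation-pattern matrix $P_k$ converges in \emph{cut norm} to $V=f\circ W$, which is nonzero $\lambda$-a.e.\ under the hypothesis on $f$. At that point it simply invokes Theorems~2 and~3 of \cite{c19}, which say that the modified Cand\`es--Recht estimator is consistent whenever the mask converges in cut norm to a strictly positive limit. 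So the paper's proof is a soft reduction to a black box; no quantitative control of $\|\hat M_k-M_k\|_F$ ever appears.

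Your route---the cone inclusion $\|H\|_*\le 3\sqrt r\,\|H\|_F$ from the subgradient inequality, the identity $\sum p_{ij}H_{ij}^2=-\langle D,H\circ H\rangle$, and the portmanteau finish---is a legitimate alternative in the Negahban--Wainwright/Klopp style, and it has the advantage of producing explicit rates rather than a bare $o(1)$. Two cautions, both at the step you flag as the main obstacle. First, the bound $|\langle D,\Delta\circ\Delta\rangle|\lesssim\sqrt r\,\|D\|_{\mathrm{op}}\|\Delta\|_F$ is not a deterministic inequality over the cone; the symmetrization/contraction argument yields instead a high-probability (or in-expectation) bound with $\sqrt{m_k}+\sqrt{n_k}$ in place of $\|D\|_{\mathrm{op}}$, and peeling is what makes it uniform in $\|\Delta\|_F$. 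This is enough for your conclusion, but the displayed inequality should be stated as holding on an event of probability $1-o(1)$, after which the boundedness $\|H\|_F^2\le 4m_kn_k$ handles the complement. Second, contraction applies to the Rademacher process obtained after symmetrization, so be explicit that you first symmetrize $D_{ij}$ and then use that $s\mapsto B_{ij}s^2/4$ is a contraction on $[-2,2]$; the reduction to $\langle\epsilon,\Delta\rangle$ then goes through. Your portmanteau step (choosing $\epsilon$ with $\mu(\{f=\epsilon\})=0$ and using $\partial\{f<\epsilon\}\subset\{f=\epsilon\}\cup D_f$) is correct.
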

\begin{remark}
We will see in numerical examples that the modified Cand\`es--Recht estimator has superior performance. The advantage of the modified USVT estimator is twofold. First, it can be used when the matrix is very large, where using nuclear norm minimization may become infeasible due to computational cost. Second, in the presence of noise --- which is often the case in practice --- the modified Cand\`es--Recht estimator may perform badly, as we will see in the simulated and real data examples.
\end{remark}
\begin{remark}
Often, in many MNAR examples, identifiability of parameters is an issue (see, e.g.,~\cite{miao2016identifiability}), which corresponds to the notions that there might be two sets of parameter values which yield same observations and hence, the true parameter value cannot be identified. In Theorems \ref{usvtthm} and \ref{candesthm}, however, the fact that we are able to approximately recover the true matrix automatically implies that identifiability is not an issue, provided that the low rank assumption holds. (That is, if there are two candidates $M_1$ and $M_2$ for the true matrix, and they both have low rank, then our estimate $\hat{M}$ will be close to both $M_1$ and $M_2$ with high probability, which means that $M_1$ must be close to $M_2$.) 
\end{remark}


\subsection{Proof sketch}\label{pfsketch}
To prove Theorem \ref{usvtthm}, we first assume that $\mu_k$ converges weakly to a limit $\mu$ as $k\to \infty$.  Let $R_k$ be the matrix obtained by applying $f$ entrywise to $M_k$ and $Q_k$ be entrywise product of $M_k$ and $R_k$. Let $Y_k$ be the matrix obtained by replacing the unrevealed entries of $X_k$ by zero. Let $P_k$ be the matrix whose $(i,j)$-th entry is $1$ if the $(i,j)$-th entry of $X_k$ is revealed, and $0$ otherwise.

The main step is to show that $R_k$ and $Q_k$ are also approximately low rank matrices, in the sense that $\|R_k\|_*= o(m_k\sqrt{n_k})$ and  $\|Q_k\|_*= o(m_k\sqrt{n_k})$. This is proved using a spectral method, similar to the spectral proof of Szemer\'edi's regularity lemma. The key idea is that a low rank matrix is approximately a block matrix after a suitable permutation of rows and columns, and therefore, applying a Lipschitz function entrywise keeps it close to a block matrix, which, in  turn, is approximately low rank. Once this is established, it then follows by the standard results for USVT that  if $\hat{Q}_k$ and $\hat{R}_k$ are the estimates of $Q_k$ and $R_k$ obtained by applying the USVT algorithm to $Y_k$ and $P_k$, then $\hat{Q}_k \approx Q_k$ and $\hat{R}_k \approx R_k$ with high probability (in some appropriate sense). 



To prove Theorem \ref{candesthm}, we first show that one can possibly permute rows and columns in each $M_k$ to get an $L^2$ limit $W$. Next we prove there is a measurable function $V:[0,1]^2 \to [0,1]$ that is nonzero almost everywhere and $P_k$ converges to $V$ in cut distance almost surely subsequentially. This implies consistency of $\hat{M}_k$ by \cite[Theorem 2 and Theorem 3]{c19}.

\subsection{Estimating $f$}\label{fsec}
We will now produce an estimator for the unknown function $f$ that can be used with any consistent estimator. Our procedure is motivated by the nonparametric density estimation methods available in statistics literature. It is interesting to note, if the underlying function $f$ were indeed a constant function, we have observed from simulated examples that our estimator $\hat{f}^b$ is also close to a constant function. Hence, $\hat{f}^b$ can be used to check if the data are MNAR or not. The estimator involves the choice of a tuning parameter $b$, which is a positive integer, chosen by the user. Given a matrix $M$ with partially revealed entries as in Subsection \ref{setupsec}, and an estimator $\hat{M}$ of $M$, the estimator $\hat{f}^b$ of $f$ is defined as follows.
\begin{enumerate}
\item For $i=1,\ldots,2b+3$, let $c_i := -1 + (i-2)b^{-1}$. Note that this is a sequence of equally spaced points, starting at $c_1 = -1 - b^{-1}$ and going up to $c_{2b+3} = 1+b^{-1}$. 
\item For each $i$, choose $a_i$ uniformly at random from the interval $[c_i - (4b)^{-1}, c_i + (4b)^{-1}]$. 
\item In the interval $[a_i, a_{i+1}]$, define $\hat{f}^b$ to be the proportion of revealed entries among those entries of $M$ such that the corresponding entry of $\hat{M}$ is in $[a_i,a_{i+1}]$. 
\end{enumerate}
Note that the above procedure defines $\hat{f}^b$ on an interval that is slightly larger than $[-1,1]$, but that should not bother us, because the domain can then be restricted to $[-1,1]$. The following theorem gives a measure of the performance of $\hat{f}^b$ as an estimate of $f$.
\begin{thm}\label{fthm}
Suppose that $f$ is Lipschitz, with Lipschitz constant~$L$. Let $\mu$ be the empirical distribution of the entries of $M$ and $\theta:= \textup{MSE}(\hat{M})$. Then 
\begin{align*}
\int (\hat{f}^b(x)-f(x))^2 d\mu(x) &\le C \theta^{1/3}b^{5/3} + \frac{Cb}{mn} + \frac{CL^2}{b^2},
\end{align*}
where $C$ is a universal constant. 
\end{thm}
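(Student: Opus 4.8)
The plan is to compare the estimator with an \emph{oracle} histogram that uses the same randomized cut-points but bins according to the true entries of $M$ instead of those of $\hat{M}$; throughout, the asserted inequality is read in expectation over the reveals and the random $a_i$'s (consistent with $\theta$ being an expectation). Write $p_{kl}\in\{0,1\}$ for the indicator that entry $(k,l)$ is revealed, so the $p_{kl}$ are independent with $\ee p_{kl}=f(m_{kl})$. For a bin $B=[a_i,a_{i+1}]$ put $S_B=\{(k,l):\hat m_{kl}\in B\}$ and $\tilde S_B=\{(k,l):m_{kl}\in B\}$, with cardinalities $N_B$ and $\tilde N_B$, and set $\hat f_B=N_B^{-1}\sum_{(k,l)\in S_B}p_{kl}$ (the value of $\hat{f}^b$ on $B$), $\tilde f_B=\tilde N_B^{-1}\sum_{(k,l)\in\tilde S_B}p_{kl}$ (the oracle value on $B$), and $\tilde g_B=\tilde N_B^{-1}\sum_{(k,l)\in\tilde S_B}f(m_{kl})$. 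Since $\mu$ is the empirical law of the \emph{true} entries, $\int(\hat{f}^b-f)^2\,d\mu=\tfrac1{mn}\sum_B\sum_{(i,j):\,m_{ij}\in B}(\hat f_B-f(m_{ij}))^2$; by $(u+v)^2\le 2u^2+2v^2$ it is enough to bound the oracle error $\int(\tilde{f}^b-f)^2\,d\mu$ and the discrepancy $\int(\hat{f}^b-\tilde{f}^b)^2\,d\mu$ separately.

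For the oracle term I would split $\tilde f_B-f(m_{ij})=(\tilde f_B-\tilde g_B)+(\tilde g_B-f(m_{ij}))$. The bins are functions of $M$ and of the $a_i$'s only, hence independent of the reveals, so conditionally on the $a_i$'s the variables $\{p_{kl}:m_{kl}\in B\}$ are independent and $\ee[(\tilde f_B-\tilde g_B)^2\mid a]\le 1/(4\tilde N_B)$; weighting by $\tilde N_B/mn$ and summing gives $O(b/(mn))$ because there are $O(b)$ bins. Each bin has width at most $3/(2b)$, so Lipschitzness yields $|\tilde g_B-f(m_{ij})|\le 3L/(2b)$ whenever $m_{ij}\in B$, contributing $O(L^2/b^2)$. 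These are exactly the last two terms of the claim.

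The discrepancy term is the crux. The key is a bound that is \emph{pathwise} in the reveals: using only $p_{kl}\in[0,1]$ one verifies that $|\hat f_B-\tilde f_B|\le 2|S_B\triangle\tilde S_B|/\tilde N_B$, and since $|\hat f_B-\tilde f_B|\le 1$ this gives $\tilde N_B(\hat f_B-\tilde f_B)^2\le 2|S_B\triangle\tilde S_B|$ (the inequality also holds, directly, when $N_B=0$). Summing over bins, and observing that every \emph{misclassified} entry---one whose values $\hat m_{kl}$ and $m_{kl}$ lie in different bins---contributes exactly $2$ to $\sum_B|S_B\triangle\tilde S_B|$, I obtain $\int(\hat{f}^b-\tilde{f}^b)^2\,d\mu\le 4H/mn$, where $H$ is the number of misclassified entries. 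Because this step is deterministic in the $p_{kl}$, it neatly sidesteps the dependence between $\hat{M}$ and the reveals.

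It remains to bound $\ee[H]$ and optimize. Since each $a_i$ is uniform on a window of width $1/(2b)$ and consecutive windows are disjoint, the probability over the $a_i$'s that a given entry is misclassified is at most $\min(1,\,Cb|\hat m_{kl}-m_{kl}|)$. Fixing a threshold $\delta>0$, every entry contributes at most $Cb\delta$ through the event $|\hat m_{kl}-m_{kl}|\le\delta$, while the number of entries with $|\hat m_{kl}-m_{kl}|>\delta$ is at most $\delta^{-2}\sum_{kl}(\hat m_{kl}-m_{kl})^2$, whose expectation is $mn\,\theta/\delta^2$. Hence $\ee[H]/mn\le Cb\delta+\theta/\delta^2$, and taking $\delta\asymp(\theta/b)^{1/3}$ gives $\ee\!\int(\hat{f}^b-\tilde{f}^b)^2\,d\mu\le C\theta^{1/3}b^{2/3}$; as $b\ge 1$ this is at most $C\theta^{1/3}b^{5/3}$, and adding the oracle bound finishes the proof. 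I expect the discrepancy term to be the only real obstacle: one must avoid the $\hat{M}$--reveal dependence (handled by the pathwise inequality), control nearly empty bins (automatic, since they carry little $\mu$-mass and the pathwise bound survives $N_B=0$), and convert entrywise $\ell^2$-closeness of $\hat{M}$ to $M$ into a misclassification count via the boundary randomization, the threshold optimization being what produces the exponent $1/3$.
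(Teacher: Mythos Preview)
Your proposal is correct and follows the same architecture as the paper's proof: the same oracle decomposition $\hat f_B - f = (\hat f_B - \tilde f_B) + (\tilde f_B - \tilde g_B) + (\tilde g_B - f)$, the same pathwise inequality $\tilde N_B(\hat f_B - \tilde f_B)^2 \le 2|S_B\triangle \tilde S_B|$ (which the paper states as $|S_{x,y}|\ee|\hat f_{x,y}-g_{x,y}|\le 2\ee|T_{x,y}\Delta S_{x,y}|$), the same use of the randomized cut-points to control boundary effects, and the same threshold $\delta\asymp(\theta/b)^{1/3}$.

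The one substantive difference is in the bookkeeping for the discrepancy term. The paper bounds $|T_{a_l,a_{l+1}}\Delta S_{a_l,a_{l+1}}|$ separately for each bin (each bin getting its own $mn\theta/\delta^2$ contribution from large-error entries) and then sums over the $O(b)$ bins, yielding $C\theta b/\delta^2 + Cb^2\delta$ and hence $C\theta^{1/3}b^{5/3}$ after optimization. You instead observe that $\sum_B|S_B\triangle\tilde S_B|=2H$ with $H$ the total number of misclassified entries, and bound $\ee H/mn\le Cb\delta+\theta/\delta^2$ directly; this avoids the spurious factor of $b$ on the large-error term and gives the sharper $C\theta^{1/3}b^{2/3}$, which you then relax to the theorem's $C\theta^{1/3}b^{5/3}$. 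So your argument actually proves a slightly stronger inequality than the one stated.
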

The above result shows that if $b$ is big, but much smaller than both $mn$ and $\theta^{-1/5}$, then $\hat{f}^b$ is close to $f$ at almost all entries of $M$. In practice, a good rule of thumb would be to choose $b$ such that $b$ is large, but at the same time, the intervals $[a_l,a_{l+1})$ contain substantial numbers of entries of~$\hat{M}$. One can try to choose $b$ optimally using some kind of cross-validation (such as leave-one-out cross-validation), but it may be hard to prove theoretical guarantees for such methods. 

Although our method of estimating $f$ has similarities with density estimation methods, the problem is quite different since the entries of the estimated matrix are not independent random variables --- in fact, they may have a complicated, or even intractable, dependence structure. One might wonder if traditional nonparametric methods of estimating $f$ can still be applied here under some smoothness constraint. Such questions are left for future investigation.

\subsection{Examples}
In this subsection we will see how the two estimators perform in some simulated examples and two real data examples. For real data examples, one should always check whether the matrix is low-rank approximable before applying our methods. Our simulations show taking $b$ of order $\sqrt{n}$ for estimating an $n \times n$ matrix yields good $\hat{f}$, although we do not have a theorem to prove that. Finally, one should also check if data is noisy or not, and should apply spectral estimator when noise is present.
\begin{ex}\label{ex:sim1}
	Consider a low rank $n\times n$ matrix $M$ with the entries of $M$ having marginal distribution $Uniform[-1,1]$. Here, we take $n=100$ and $\rank(M)= 7$. To generate such a matrix, we define $M_1=\sum_{i=1}^{6}d_iu_iv^T_i$, where: 
\begin{itemize}
\item For $i=1,\ldots,5$,  $d_i=2^{-i}$, and the components of $u_i$ and $v_i$ are i.i.d.~$Bernoulli(1/\sqrt{2})$ random variables.
\item $d_6=1$, $u_6$ is a vector of all $1$s, and $v_6$ has i.i.d $Uniform[0,2^{-5}]$ entries. 
\end{itemize}
It is not difficult to see that the entries of $M_1$ are i.i.d.~$Uniform[0,1]$ random variables. Multiplying each entry by $2$ and subtracting $1$, we get $M$. Then $M$ has rank $7$ with probability $1$, and the entries of $M$ are uniformly distributed in $[-1,1]$. We take $f(x)=0.5x^2 + .3$ to generate missing entries, and do not add noise. To obtain the modified Cand\`es--Recht estimator, we used code from the R package {\tt filling}~\cite{nucf}  and imposed the $\ell^\infty$ constraint using the {\tt CVXR} package~\cite{cvxr2020}. The modified USVT algorithm, being quite straightforward, was coded without the aid of existing packages.

The modified Cand\`es--Recht estimator was able to exactly recover the true $M$  almost all the time, resulting a very small MSE of order $10^{-9}$. The modified USVT estimator performed much worse, with an unimpressive MSE of $0.123$. The run-time of the modified USVT estimator was much lower than that of the modified Cand\`es--Recht estimator: $0.31$ seconds versus $4.08$ minutes. We will see in the next example that the performance of the modified USVT estimator becomes better when $n$ is larger, accompanied by a huge gain in run-time over the other estimator. We report both our estimators and their MSEs and run-times in Table \ref{tab:table1}. 
	\begin{table}[h]
		\caption {\label{tab:table1} Comparison table for Example \ref{ex:sim1}} 
	\begin{footnotesize}
		\begin{tabular}{ p{2cm} p{3cm} p{5cm}   }
			\toprule
			& {\small Modified USVT} & {\small Modified Cand\`es--Recht}\\
			\midrule
			MSE   & 0.123   &   $\sim 10^{-9}$\\
			Run-time &   0.31 sec     & 4.08 min\\
			\bottomrule	
		\end{tabular}
		\end{footnotesize}
	\end{table}

Next, for both estimators of $M$, we estimated $f$ using the method proposed in Section \ref{fsec}, taking $b=25$. The estimated $\hat{f}$'s are shown in Figure~\ref{fig:figure0}. As expected, the $\hat{f}$ based on the modified Cand\`es--Recht estimator has better performance.

	\begin{figure}[tbp]
	\begin{center}
			\includegraphics[scale=0.4]{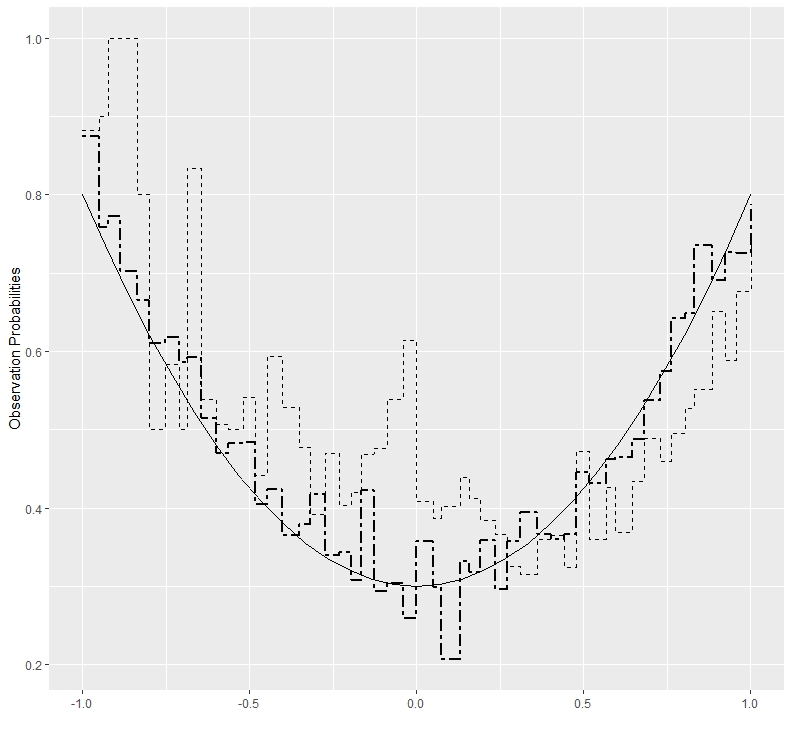}
			\caption{Estimates of $f$ in Example \ref{ex:sim1}. The dashed curve corresponds to modified USVT estimator, the double-dashed curve corresponds to the modified Cand\`es-Recht estimator, and the solid curve is the true $f$.}
			\label{fig:figure0}
	\end{center}
	\end{figure}
	
\end{ex}

\begin{ex}\label{ex:sim1.5}
	 Here, we want to see how our estimator performs as we vary the rank of underlying parameter matrix. To this end, we take parameter matrix same as previous example, with $n=500$ and choose rank $r=4,6,8,10$. We only report result of the modified USVT estimator, the results corresponding to modified Cand\'es- Recht estimator varies similarly. The MSE of the estimator as we vary rank are $0.007,0.012,0.012,0.033$ respectively.  The estimated $\hat{f}$ is shown in Figure~\ref{fig:figure1.5}. We also observe our estimator performs better than the vanilla USVT algorithm developed for MCAR. A comparison of MSE of the two estimators has been given below in Table \ref{tab:table1.5}.

	\begin{table}[h]
		\caption {\label{tab:table1.5} Comparison table of MSE for Example \ref{ex:sim1.5}} 
	\begin{footnotesize}
		\begin{tabular}{ p{2cm} p{3cm} p{5cm}   }
			\toprule
			Rank& {\small Modified USVT} & {\small Regular USVT}\\
			\midrule
			4   & 0.007   &   0.060\\
				6   & 0.012  &   0.063\\
					8   & 0.012  &   0.063\\
						10   & 0.033  &  0.062\\			
			\bottomrule
		\end{tabular}
	\end{footnotesize}
\end{table}

		\begin{figure}[tbp]
		\begin{center}
			\includegraphics[scale=0.5]{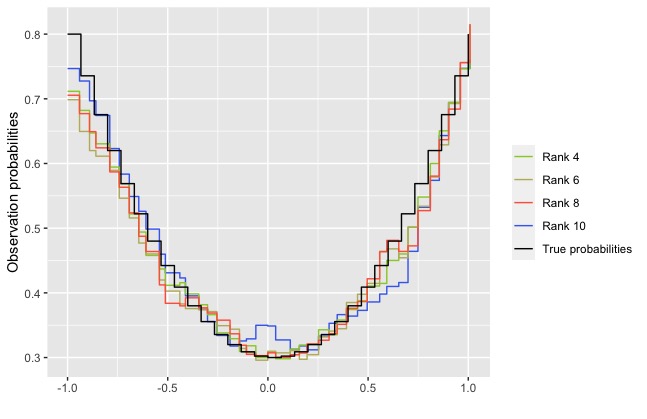}
			\caption{Estimates of $f$ in Example \ref{ex:sim1.5}.}
			\label{fig:figure1.5}
		\end{center}
	\end{figure}
\end{ex}

\begin{ex}\label{ex:sim2}
	This is the same as Example~\ref{ex:sim1}, but with $n=500$  to show how modified USVT has significant computation time advantage over the modified Cand\`es-Recht estimator. The MSE of the modified USVT estimator is now $0.011$, and that of the modified Cand\`es-Recht estimator is of order $10^{-9}$. So, with this larger sample size, the modified USVT estimator has reasonably good performance. The time to compute the modified USVT estimator $0.85$ seconds, whereas for the modified Cand\`es--Recht estimator, it is $2.51$ hours. This shows that even though the latter has much better performance in terms of MSE, it may be more practical to use the former if the matrix is large. We provide the estimators of $f$ in Figure \ref{fig:figure1}, taking $b=25$. We report the MSEs and run-times for both estimators in Table \ref{tab:table2}.
	\begin{table}[h]
		\caption {\label{tab:table2} Comparison table for Example \ref{ex:sim2}} 
	\begin{footnotesize}
		\begin{tabular}{ p{2cm} p{3cm} p{5cm}   }
			\toprule
			& {\small Modified USVT} & {\small Modified Cand\`es--Recht}\\
			\midrule
			MSE   & 0.011   &   $\sim 10^{-9}$\\
			Run-time &   0.85 sec     & 2.51 hrs\\
			\bottomrule
		\end{tabular}
	\end{footnotesize}
\end{table}
	
	A visual examination shows that both estimators perform well.
	
	\begin{figure}[tbp]
		\begin{center}
			\includegraphics[scale=0.4]{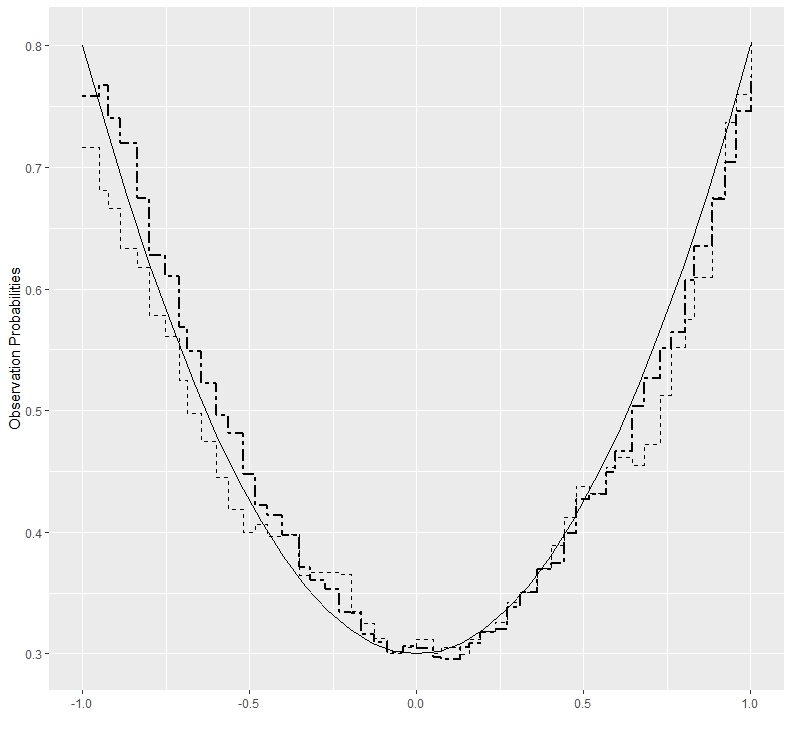}
			\caption{Estimates of $f$ in Example \ref{ex:sim2}. The dashed curve corresponds to modified USVT estimator, the double-dashed curve corresponds to the modified Cand\`es--Recht estimator, and the solid curve is the true $f$.}
			\label{fig:figure1}
		\end{center}
	\end{figure}
\end{ex}

\begin{ex}
	Under the same setup as before, we now show how the change of the parameter $b$, number of bins, affect the estimate of underlying function $f$. We choose $b=20,30,40,50$ and plot the resulting $\hat{f}$ in Figure~\ref{fig:figurebvary}. There does not seem to have much difference in $\hat{f}$ across different values of~$b$ .
	\begin{figure}[tbp]
		\begin{center}
			\includegraphics[scale=0.6]{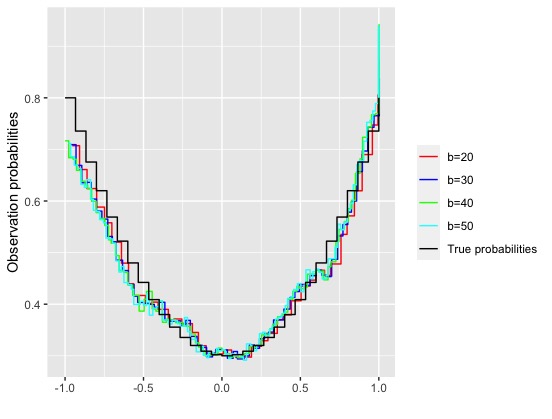}
			\caption{Estimating $\hat{f}$ under different values of $b$.}
			\label{fig:figurebvary}
		\end{center}
	\end{figure}
\end{ex}

\begin{ex}\label{ex:sim3}
	We will now show that the modified Cand\`es--Recht estimator performs poorly under presence of noise. Here, we take $n=100$ and $\rank(M)=2$, with the marginal distribution of the entries of $M$ being $Uniform[0,1]$, generated by the same procedure that we used to generate $M_1$ in Example~\ref{ex:sim1}. The noisy version of $M$, namely $X$, is generated as follows. For each $(i,j)$, generate $x_{ij} = 1$ with probability $m_{ij}$ and $x_{ij}=0$ with probability $1-m_{ij}$. Note that $\ee(x_{ij})=m_{ij}$. The entry $x_{ij}$ is revealed with probability $m_{ij}$, and remains hidden with probability $1-m_{ij}$ (that is, we took $f(x)=x$). For $n=100$, the MSE of modified USVT estimator turned out to be $0.017$, much better than the MSE of the modified Cand\`es--Recht estimator, which was $0.112$. The estimates of $f$ based on the two methods, with $b=10$, are depicted in Figure~\ref{fig:figure2}. The estimate based on the modified USVT method is reasonably good, even with $n$ as small as $100$  in this example. The estimate based on the modified Cand\`es--Recht estimator, however, is completely off: It estimates $f$ to be large near $0$ and $1$ and zero everywhere in between. This is because the observed entries consist solely of zeros and ones, and $\hat{M}$ coincides with $X$ at the observed values. So the estimation procedure for $\hat{f}$ deduces, incorrectly, that there is no chance of observing an entry if its non-noisy value is strictly between $0$ and $1$. 
	
		\begin{figure}[tbp]
		\begin{center}
			\includegraphics[scale=0.4]{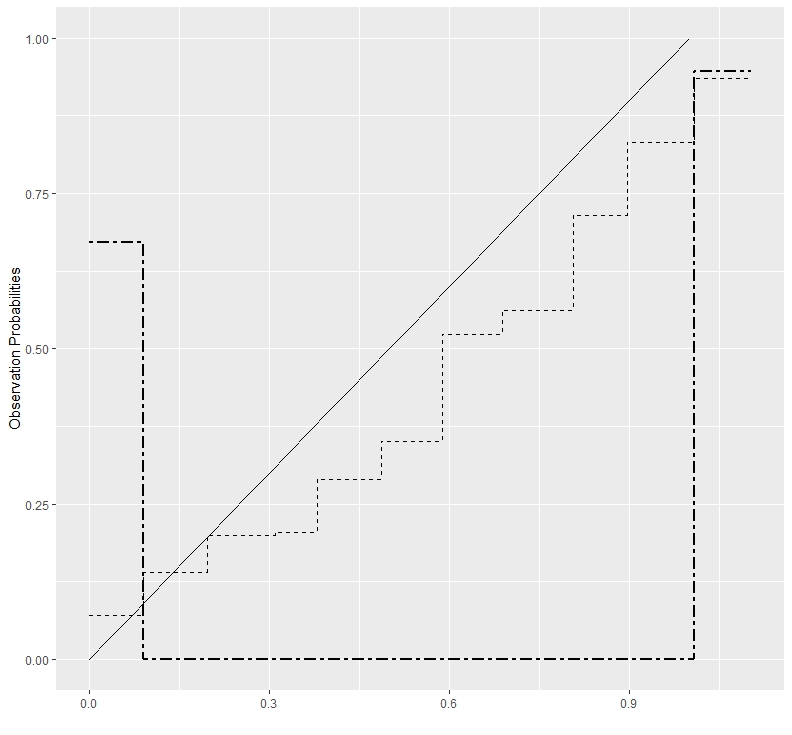}
			\caption{Estimation of $f$ in Example \ref{ex:sim3}. The dashed curve corresponds to modified USVT estimator, the double-dashed curve corresponds to the modified Cand\`es-Recht estimator, and the solid curve is the true $f$.}
			\label{fig:figure2}
		\end{center}
	\end{figure}
	
\end{ex}



\begin{ex}\label{ex:real1}
We now consider a real data example. In real data, it is not possible to  compare the performance of $\hat{f}$ with the  `true $f$', because we do not know what the true $f$ is (or if our model is actually valid). Still, if $\hat{f}$ turns out to be substantially different than a constant function, it validates the viewpoint that entries are not missing uniformly at random. We consider the well-known Jester data~\cite{jester1}, which consists of $100$ jokes rated by 73,421 users. The ratings are continuous values between $-10$ and $10$, entered by the users by clicking on an on-screen `funniness' bar. Not every user rates every joke, so there are many missing entries. Due to the prohibitively large run-time of the modified Cand\`es--Recht estimator, we first took a submatrix consisting of all $100$ jokes but a random sample of $300$ users.  Approximately $45 \%$ of the values were missing in this submatrix. The estimates of $f$ based on the two methods (with $b = 10$) are shown in Figure~\ref{fig:figurejes}. 

Interestingly, the two estimates are very different. We posit that this is due to the presence of noise in the observed matrix, which messes up the modified Cand\`es--Recht estimator. Indeed, the continuous nature of the ratings makes it very unlikely that the observed matrix is without noise. This is further validated by Figure~\ref{fig:figurejes2}, where we plot the percentage of the modified Cand\`es-Recht estimator matrix $\hat{M}$ that is captured by its rank-$k$ approximation, $k = 1,2,\ldots, 100$. (The percentage is simply the sum of squares of the top $k$ singular values divided by the sum of squares of all singular values.) This figure shows that to even get within $80\%$ of $\hat{M}$, we need to consider a rank-$25$ approximation. Thus, $\hat{M}$ is not of low rank, even approximately. This  invalidates the low rank assumption of the Cand\`es--Recht procedure, and allows us to conjecture that the $\hat{f}$ given by the modified USVT estimator is a better reflection of the true $f$, assuming that the model is correct.

\begin{figure}[tbp]
	\begin{center}
		\includegraphics[scale=0.4]{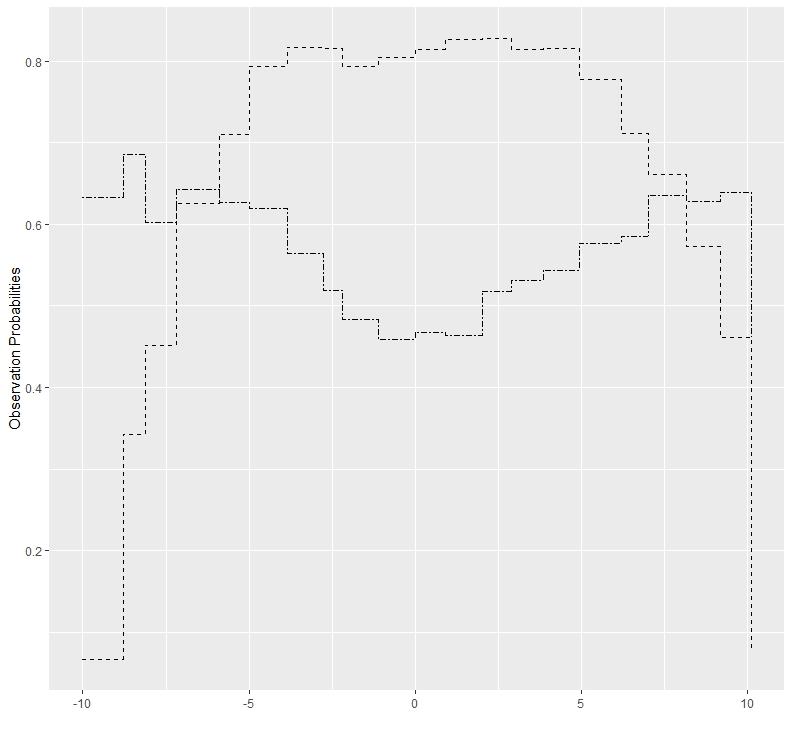}
		\caption{Estimation of $f$ in Example \ref{ex:real1}. The dashed curve corresponds to the modified USVT estimator and the double-dashed curve corresponds to the modified Cand\`es-Recht estimator.}
		\label{fig:figurejes}
	\end{center}
\end{figure}

\begin{figure}[tbp]
	\begin{center}
		\includegraphics[scale=0.4]{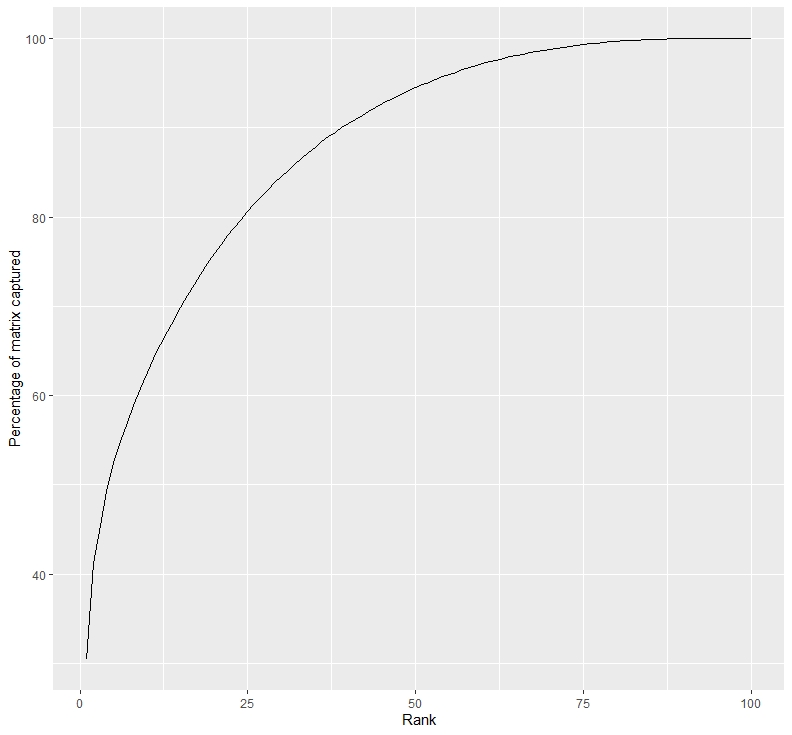}
		\caption{Let $\hat{M}$ be the modified Cand\`es--Recht estimate of $M$ in Example \ref{ex:real1}. This graph shows that percentage of $\hat{M}$ that is captured by its rank-$k$ approximation, $k = 1,2,\ldots,100$. }
		\label{fig:figurejes2}
	\end{center}
\end{figure}

\end{ex}

\begin{ex}\label{ex:real2}
We continue with the Jester data example. Assuming that the $\hat{f}$ given by the modified USVT estimator reflects the true state of affairs, we ran the modified USVT method on the whole dataset. The estimated $f$, with $b = 70$, is shown in Figure \ref{fig:figurefull}. The inverted U-shape is mysterious. It is not clear to us what may have led to this, if it is indeed close to the true $f$, because we do not know what caused entries to be missing in this dataset. 

\begin{figure}[tbp]
	\begin{center}
		\includegraphics[scale=0.4]{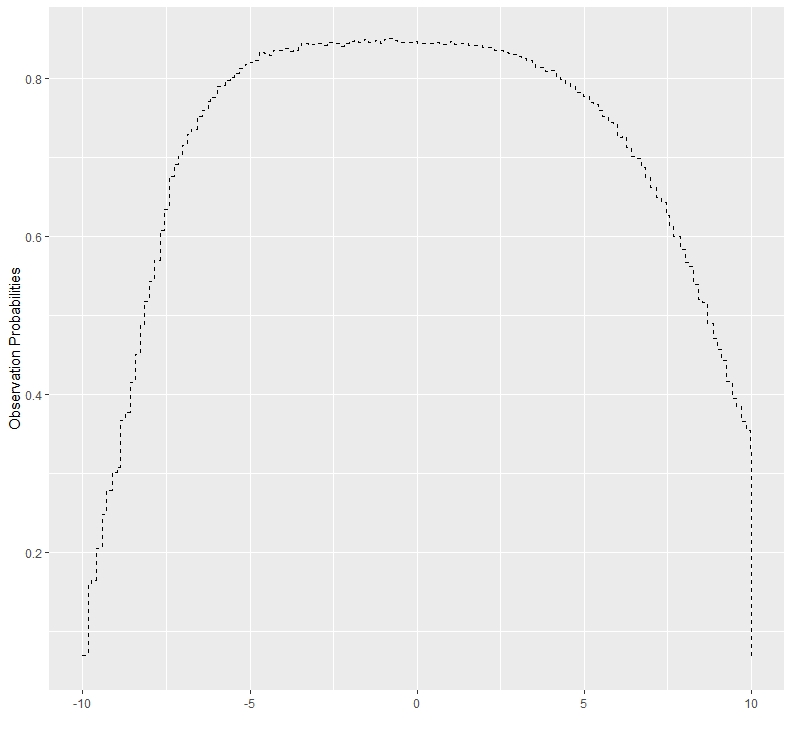}
		\caption{Estimation of $f$ in Example \ref{ex:real2} using the modified USVT estimator.}
		\label{fig:figurefull}
	\end{center}
\end{figure}
\end{ex}

\begin{ex}\label{ex:real3}
	For our final example, we consider the Film Trust dataset of movie ratings~\cite{filmrate}. This dataset consists of ratings given by $1508$ users to $2071$ movies, with many missing entries. The user ratings range in the set $\{0.5,1,1.5,2,2.5,3,3.5,4\}$. This dataset is much sparser than the Jester data; only $35497$ ratings are available, which is about $1.13$ percent of the total number of possible ratings. Due to the large size of the dataset, we implemented only the modified USVT algorithm. We assume that each user has a `true' rating for each movie, and the observed rating, if any, is a noisy version of the true rating. The observation probability is then a function $f$ of the true rating. The estimate of $f$, with $b=30$, is plotted in Figure \ref{fig:filmgraph}. As expected, a high rating increases the chance of the rating being available; however, there is a dip towards the end of the curve which we do not know how to explain. One possible explanation is that very highly rated movies are often classics that not many people watch and rate because they have already watched those movies before. 
	\begin{figure}[tbp]
		\begin{center}
			\includegraphics[scale=0.4]{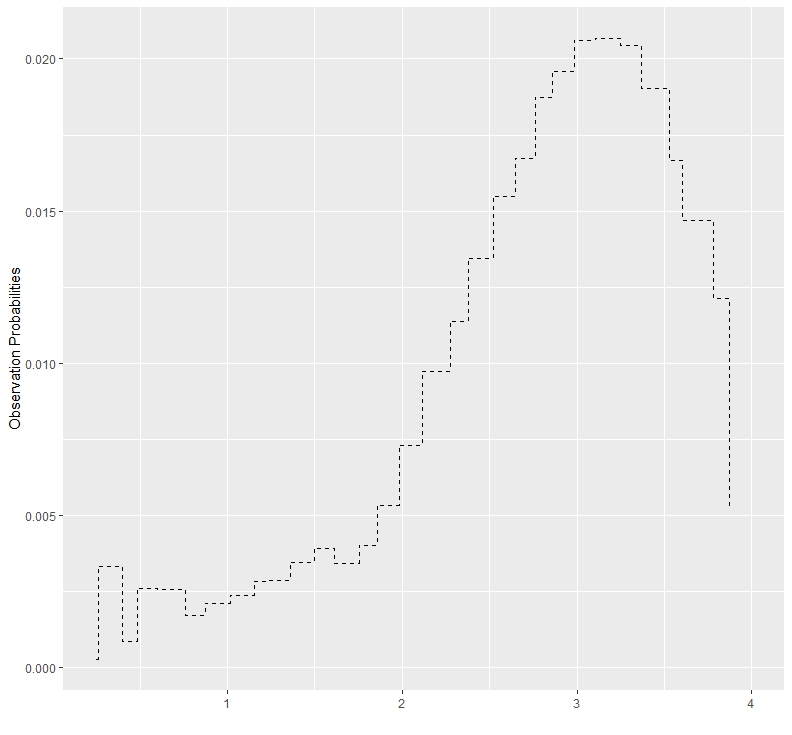}
			\caption{Estimation of $f$ in Example \ref{ex:real3} using the modified USVT estimator.}
			\label{fig:filmgraph}
		\end{center}
	\end{figure}
\end{ex}

\section{Proof of Theorem \ref{candesthm}}
For an $m \times n$ matrix $A$, define 
\[
\|A\|_2 := \biggl(\frac{1}{mn} \sum_{i,j} a_{ij}^2\biggr)^{1/2}.
\]
Note that $\|A\|_2^2$ is the sum of squares of the singular values of $A$, divided by $mn$. Given the matrix $A$, we will also denote by $A$ the function $A:[0,1]^2\to[-1,1]$ which equals $a_{ij}$ in the rectangle $(\frac{i-1}{m}, \frac{i}{m})\times (\frac{j-1}{n}, \frac{j}{n})$ for each $1\le i\le m$ and $1\le j\le n$. On the boundaries of the rectangles, we define the function $A$ is to be zero. Note that with this convention, $\|A\|_2$ equals the $L^2$ norm of the function $A$, which will also be denoted by $\|A\|_2$. 

For each $k$, let $S_k$ denote the group of all permutations of $\{1,\ldots,k\}$. Given an $m\times n$ matrix $A$ and a measurable map $W:[0,1]^2 \to [-1,1]$, we define
	\begin{equation}\label{eq:ml1}
	d_2(A,B):=\min_{\pi \in S_m,\, \tau \in S_n} \|A^{\pi,\tau}-W\|_2,
	\end{equation}
where $A^{\pi,\tau}$ is the matrix whose $(i,j)$-th entry is $a_{\pi(i)\tau(j)}$. 
The first key step in the proof of Theorem \ref{candesthm} is the following lemma. 
	\begin{lmm} \label{lmm:l1lem}
		Suppose that for each $k$, we have a matrix $M_k$ of order $m_k\times n_k$ with entries in $[-1,1]$, where $m_k,n_k\to \infty$ as $k\to\infty$. Suppose that this sequence has uniformly bounded rank. Then there exists a subsequence $M_{k_l}$ and a measurable map $W:[0,1]^2 \to [-1,1]$ such that $d_2(M_{k_l}, W)\to 0$ as~$l\to\infty$.
	\end{lmm}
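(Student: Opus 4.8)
The plan is to exploit the bounded-rank hypothesis to represent each $M_k$ by a bounded family of low-dimensional feature vectors, take weak limits of the associated empirical distributions of these features, and then build $W$ as an inner product of two quantile-type maps. First, since $\sup_k \rank(M_k)=:r<\infty$, after passing to a subsequence I may assume $\rank(M_k)=r$ for all $k$. For each $k$ I would select an $r\times r$ submatrix of $M_k$ of maximal absolute determinant, say in rows $i_1,\dots,i_r$ and columns $j_1,\dots,j_r$. Writing $C=M_k(:,\{j_a\})$, $R=M_k(\{i_a\},:)$ and $\hat A=M_k(\{i_a\},\{j_a\})$, the rank-$r$ identity $M_k=C\hat A^{-1}R$ holds, and the classical maximal-volume (maxvol) bound gives $\|C\hat A^{-1}\|_{\max}\le 1$. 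Hence, setting $\alpha_i:=(C\hat A^{-1})_{i,\cdot}\in[-1,1]^r$ (the row features) and $\beta_j:=(m_{i_1 j},\dots,m_{i_r j})\in[-1,1]^r$ (the column features), I obtain $m_{ij}=\langle\alpha_i,\beta_j\rangle$ for all $i,j$, with all feature vectors lying in the fixed compact cube $[-1,1]^r$.

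Next, let $\rho_k$ and $\nu_k$ be the empirical distributions of $\{\alpha_i\}_{i\le m_k}$ and $\{\beta_j\}_{j\le n_k}$ on $[-1,1]^r$. Since this cube is compact, Prokhorov's theorem lets me pass to a further subsequence along which $\rho_k\to\rho$ and $\nu_k\to\nu$ weakly. I would then fix measure-preserving maps $\alpha,\beta:[0,1]\to[-1,1]^r$ with $\alpha_*\mathrm{Leb}=\rho$ and $\beta_*\mathrm{Leb}=\nu$ (such maps always exist on a nonatomic base space), and define $W(x,y):=\langle\alpha(x),\beta(y)\rangle$. To see $|W|\le 1$ almost everywhere, note that $|\langle a,b\rangle|\le 1$ holds for every pair in the support of each product $\rho_k\otimes\nu_k$, so by the portmanteau theorem $\rho\otimes\nu$ assigns zero mass to the open set $\{|\langle a,b\rangle|>1\}$; since $(\alpha(x),\beta(y))$ has law $\rho\otimes\nu$, the claim follows, and I redefine $W$ to be $0$ on the exceptional null set so that it maps into $[-1,1]$.

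It remains to produce permutations $\pi_k\in S_{m_k}$, $\tau_k\in S_{n_k}$ with $\|M_k^{\pi_k,\tau_k}-W\|_2\to 0$. The key structural point is that relabeling the rows (resp.\ columns) of $M_k$ is exactly a reordering of the atoms of $\rho_k$ (resp.\ $\nu_k$) into the equal-length grid intervals, and the step function attached to $M_k^{\pi,\tau}$ equals $\langle\tilde\alpha_k(x),\tilde\beta_k(y)\rangle$, where $\tilde\alpha_k,\tilde\beta_k$ are the corresponding grid step maps. So I want to choose the reorderings so that $\tilde\alpha_k\to\alpha$ and $\tilde\beta_k\to\beta$ in $L^2([0,1])$. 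This is where the \emph{main work} lies: weak convergence alone is too weak, so I would upgrade $\rho_k\to\rho$ to $\mw_2(\rho_k,\rho)\to 0$ (valid on the compact cube), take an optimal coupling, and use a noise-outsourcing argument to build measure-preserving maps into $\rho_k$ that are $L^2$-close to $\alpha$; a final rounding step replaces these by honest grid permutations at the cost of an $O(m_k^{-1/2})$ error. The \emph{delicate part} is precisely this passage from a weakly convergent sequence of empirical measures to measure-preserving reorderings that converge in $L^2$, together with the discretization to genuine permutations.

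Once $\tilde\alpha_k\to\alpha$ and $\tilde\beta_k\to\beta$ in $L^2$, bilinearity and Cauchy--Schwarz finish the argument: writing $\langle\tilde\alpha_k,\tilde\beta_k\rangle-\langle\alpha,\beta\rangle=\langle\tilde\alpha_k-\alpha,\tilde\beta_k\rangle+\langle\alpha,\tilde\beta_k-\beta\rangle$ and using that all features have Euclidean norm at most $\sqrt r$, I get $\|M_k^{\pi_k,\tau_k}-W\|_2\le\sqrt r\,(\|\tilde\alpha_k-\alpha\|_2+\|\tilde\beta_k-\beta\|_2)\to 0$. Since $d_2(M_k,W)$ is the minimum over all permutations, this yields $d_2(M_{k_l},W)\to 0$ along the chosen subsequence, as required.
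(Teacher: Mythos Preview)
Your strategy is sound and genuinely different from the paper's. The paper proceeds via a Szemer\'edi-type regularity argument: it first shows that any matrix with rank at most $r$ and entries in $[-1,1]$ admits, for each $j$, a block-averaged approximation on a pair of nested partitions with at most $(2^{j+2}j)^{j^2}$ parts each and $\|\cdot\|_2$-error at most $2\sqrt{r}/j+6j^32^{-j}$. Applying this uniformly to the $M_k$, passing to a subsequence along which the block shapes and block values stabilize at every level $j$, and then invoking martingale convergence on the nested block averages produces the limit $W$. Low rank enters only through the singular-value tail bound $\sum_{i>l}\sigma_i^2\le r\sigma_{l+1}^2$; no factorization of $M_k$ is ever written down. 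Your route instead exploits low rank directly via the maxvol factorization $m_{ij}=\langle\alpha_i,\beta_j\rangle$ with features in $[-1,1]^r$, and reduces the problem to realizing $W_2$-convergence of the feature laws as $L^2$-convergence of grid parametrizations. This makes the role of the rank bound much more explicit, and your portmanteau argument for $|W|\le 1$ is clean; the paper's approach, being closer to graph-limit machinery, would adapt more readily to nuclear-norm hypotheses in place of a hard rank bound.

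One step needs tightening: the ``noise-outsourcing plus $O(m_k^{-1/2})$ rounding'' does not work quite as you describe it. Noise-outsourcing requires an auxiliary uniform variable, so it produces a map on $[0,1]^2$ rather than on $[0,1]$ with the already-fixed $\alpha$; and even granting some $\alpha_k:[0,1]\to[-1,1]^r$ of law $\rho_k$ with $\|\alpha_k-\alpha\|_2$ small, composing with a measure-preserving bijection that sends grid intervals onto the level sets of $\alpha_k$ does give a grid step function, but its $L^2$-distance to $\alpha$ is \emph{not} controlled by $\|\alpha_k-\alpha\|_2$, since the rearrangement moves $\alpha$ as well. A direct argument avoids both issues: approximate $\alpha$ in $L^2$ by a simple function $s$ taking $N$ values on consecutive intervals; then, up to an $O(N/m_k)$ boundary error, $\inf_\pi\|\tilde\alpha_k^\pi-s\|_2$ is exactly an optimal-assignment cost between $\rho_k$ and a measure within $O(N/m_k)$ of the law of $s$, hence is at most $W_2(\rho_k,\rho)+\|s-\alpha\|_2+o(1)$. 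Adding back $\|s-\alpha\|_2$ and letting first $k\to\infty$ and then $\|s-\alpha\|_2\to 0$ gives $\inf_\pi\|\tilde\alpha_k^\pi-\alpha\|_2\to 0$, which is what you need.
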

	
		We will now prove Lemma \ref{lmm:l1lem}. The proof closely follows the proof of \cite[Theorem 1]{c19}. Let $m$ and $n$ be two positive integers. Let $\cp$ be a partition of $\{1,\ldots,m\}$ and let $\cq$ be a partition of $\{1,\ldots, n\}$. The pair $(\cp,\cq)$ defines a block structure for $m\times n$ matrices in the natural way: Two pairs of indices $(i,j)$ and $(i',j')$ belong to the same block if and only if $i$ and $i'$ belong to the same member of $\cp$ and $j$ and $j'$ belong to the same member of $\cq$. 
	
	If $A$ is an $m\times n$ matrix, let $A^{\cp,\cq}$ be the `block averaged' version of $A$, obtained by replacing the entries in each block (in the block structure defined by $(\cp,\cq)$) by the average value in that block. It is easy to see that 
	\begin{equation}\label{boxcontract}
	\|A^{\cp,\cq}\|_2 \le \|A\|_2.
	\end{equation}
	We need the following lemma.

	\begin{lmm}\label{szemlmm}
		For any $m\times n$ matrix $A$ with entries in $[-1,1]$, and $\rank(A)\le r$, there is a sequence of partitions $\{\cp_j\}_{j\ge 1}$ of $\{1,\ldots,m\}$ and a sequence of partitions $\{\cq_j\}_{j\ge 1}$ of $\{1,\ldots, n\}$ such that for each $j$,
		\begin{enumerate}
			\item $\cp_{j+1}$ is a refinement of $\cp_j$ and $\cq_{j+1}$ is a refinement of $\cq_j$, 
			\item $|\cp_j|$ and $|\cq_j|$ are bounded by $(2^{j+2}j)^{j^2}$, and
			\item $\|A - A^{\cp_j,\cq_j}\|_2\le 2\sqrt{r}/j+6j^3 2^{-j}$.
		\end{enumerate}
	\end{lmm}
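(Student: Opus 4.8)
The plan is to exploit the rank bound to turn the problem into a low-dimensional quantization question, and to build the partitions by binning singular-vector data on successively finer, nested grids. First I would take the singular value decomposition $A=\sum_{s=1}^{r}\sigma_s u_s v_s^{T}$, with $\{u_s\}$ orthonormal in $\rr^m$ and $\{v_s\}$ orthonormal in $\rr^n$, and record the energy bound
\[
\sum_{s=1}^{r}\sigma_s^2 = mn\,\|A\|_2^2 \le mn ,
\]
which is just the entrywise bound $|a_{ij}|\le 1$ rewritten. The key observation is that block-averaging over the rows is the orthogonal projection $\Pi_{\cp}$ onto vectors that are constant on the blocks of $\cp$, so $A^{\cp}=\Pi_{\cp}A$; using the orthonormality of the $v_s$ this yields the clean identity
\[
\|A - A^{\cp}\|_2^2 = \frac{1}{mn}\sum_{s=1}^{r}\sigma_s^2\,\bigl\|(I-\Pi_{\cp})u_s\bigr\|^2 .
\]
In words, the row-averaging error is exactly the $\sigma_s^2$-weighted failure of the left singular vectors to be constant on blocks. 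The identical statement holds for column-averaging with the $v_s$, and the two errors combine (up to a factor of $2$, via the triangle inequality and the contraction \eqref{boxcontract}), so it suffices to treat rows and columns separately.

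This reduces the lemma to a clustering statement: partition the rows so that every $u_s$ is nearly constant on each block. I would assign to row $i$ the scaled feature vector $\tilde w_i:=(\sigma_1 u_1(i),\dots,\sigma_r u_r(i))\in\rr^r$ and let $\cp_j$ be the partition in which two rows are grouped together iff their feature vectors land in the same cell of a grid on $\rr^r$. With this choice the displayed identity says that $\|A-A^{\cp}\|_2^2$ equals $\tfrac1{mn}$ times the total within-cell variance of the points $\tilde w_i$, a vector-quantization cost. Since $\sum_i\|\tilde w_i\|^2=\sum_s\sigma_s^2\le mn$, the mean energy of the feature cloud is bounded by $n$, and an adaptive grid of roughly $N\asymp (j/\sqrt r)^{\,r}$ cells (finer near the origin, where the features concentrate) should drive the within-cell variance below $\varepsilon^2 mn$ with $\varepsilon\asymp\sqrt r/j$. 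This is where the principal error term $2\sqrt r/j$ comes from; the secondary term $6j^3 2^{-j}$ I expect to arise from truncating the grid to a bounded region and from handling the $o(1)$ fraction of rows with atypically large $\|\tilde w_i\|$ separately.

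The two structural demands of the statement are the real content, and I would meet them deliberately. For nestedness I would use dyadic grids (halving the mesh, or equivalently adjoining a fixed finite set of new breakpoints, as $j$ increases), so that $\cp_{j+1}$ is \emph{automatically} a refinement of $\cp_j$ rather than an unrelated grid; the same scheme on the $v_s$ gives the refining sequence $\cq_j$. For the cardinality bound I must decouple the number of cells from $m$ and $n$, and this is the crux: a naive uniform grid fine enough to achieve error $\varepsilon$ has mesh $\asymp 1/\sqrt m$ and hence $m$-dependent cardinality, so the proof must instead use the mean-energy bound $\sum_s\sigma_s^2\le mn$ to argue that the feature cloud is confined (after peeling off boundedly many high-energy directions and outlier rows) to a region that can be quantized with a number of cells depending only on $j$, of the stated form $(2^{j+2}j)^{j^2}$.

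The hard part will therefore be the simultaneous balancing act in the third paragraph: driving the $L^2$ error to zero as $j\to\infty$, bounding the block count by $(2^{j+2}j)^{j^2}$ independently of $m,n$ (this $m,n$-independence being precisely what makes the lemma useful in Lemma \ref{lmm:l1lem}, since it forces all $M_k$ into a fixed finite-dimensional family of block structures), and keeping the grids nested — all at once. The trivial regime $j\lesssim\sqrt r$, where $2\sqrt r/j$ already exceeds the trivial bound $\|A-A^{\cp,\cq}\|_2\le\|A\|_2\le 1$, can be dispatched by coarse partitions, so the quantization estimate only needs to be pushed in the range $j\gtrsim\sqrt r$, where $(j/\sqrt r)^{\,r}\le (2^{j+2}j)^{j^2}$ comfortably holds. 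Once the trade-off between cell count and distortion is nailed down, the remainder is the routine variance bookkeeping indicated above.
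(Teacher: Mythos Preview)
Your identity $\|A-A^{\cp}\|_2^2 = (mn)^{-1}\sum_s \sigma_s^2\|(I-\Pi_{\cp})u_s\|^2$ is correct and your reduction to a vector-quantization problem for the feature cloud $\tilde w_i$ is sound. But the paper's argument takes a different and cleaner route, and the difference is exactly where you locate ``the hard part''.

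The paper does \emph{not} try to quantize the full $r$-dimensional feature vectors. Instead it first \emph{truncates} the SVD at the threshold $\sqrt{mn}/j$: keeping only $A_1=\sum_{i\le l}\sigma_i u_i v_i^T$ where $l$ is the number of singular values exceeding $\sqrt{mn}/j$. The tail $A-A_1$ then contributes
\[
\|A-A_1\|_2^2 = \frac{1}{mn}\sum_{i>l}\sigma_i^2 \le \frac{r}{mn}\cdot\frac{mn}{j^2}=\frac{r}{j^2},
\]
which is precisely the $\sqrt{r}/j$ term. Crucially, since $\sum\sigma_i^2\le mn$, there are at most $l\le j^2$ singular values above the threshold, so the truncated matrix $A_1$ has rank at most $j^2$ \emph{independently of $r$, $m$, and $n$}. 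The remaining quantization step (building $\tilde A_1$, $\cp_j$, $\cq_j$) is then done in this $j^2$-dimensional setting, following \cite[Lemma~4]{c19}, and yields the $3j^3 2^{-j}$ term together with the partition-size bound $(2^{j+2}j)^{j^2}$ --- the exponent $j^2$ being exactly the rank bound on $A_1$.

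So you have the roles of the two error terms reversed: in the paper, $\sqrt{r}/j$ comes from discarding small singular values, and $6j^3 2^{-j}$ from quantizing the top part. More importantly, the truncation step is what makes the block count $m,n$-independent \emph{for free}: once you are down to $l\le j^2$ singular vectors, the quantization is a fixed-dimensional problem. Your proposal to quantize directly in $\rr^r$ with adaptive, nested dyadic grids and a separate outlier analysis might be made to work, but the outlier handling you sketch is genuinely delicate (a single row can carry energy up to $\sum_s\sigma_s^2$, so Markov alone does not control the within-cell variance of the ``bad'' cell), and you would need a full argument there. The paper's thresholding trick sidesteps this entirely.
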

	\begin{proof}
		Let $A = \sum_{i=1}^r \sigma_i u_i v_i^T$ be the singular value decomposition of $A$, where $\sigma_1\ge \cdots \ge \sigma_r$, and some of the $\sigma_i$'s are zero if the rank is strictly less than $r$. Take any $j\ge 1$. Let $l$ be the largest number such that $\sigma_l> \sqrt{mn}/j$. If there is no such $l$, let $l=0$. Let 
		\[
		A_1 := \sum_{i=1}^l \sigma_i u_i v_i^T.
		\]
		We define $\cp_j$, $\cq_j$, and $\tilde{A}_1$ as in the proof of \cite[Lemma 4]{c19}, as follows. For $1 \le i \le l$ and $1 \le a \le m$, let $u_{ia}$ denote the $a^{\text{th}}$ component of $u_i$. Let $\tilde{u}^{(j)}_{ia}$ be the largest integer multiple of $2^{-j}m^{-1/2}$ that is $\le u_{ia}$. Let $\tilde{u}^{(j)}_{i}$ be the vector whose $a^{\text{th}}$ component is $\tilde{u}^{(j)}_{ia}$. Similarly, for $1 \le b\le n$, let $\tilde{v}^{(j)}_{ib}$ be the largest integer multiple of $2^{-j}n^{-1/2}$ that is $\le v_{ib}$. Finally, define $\tilde{A}=\sum_{j=1}^{l} \sigma_i \tilde{u}_i \tilde{v}^\top_i$. This matrix $\tilde{A}$ is used as a block-approximation of $A_1$. As shown in \cite{c19}, this sequence of partitions satisfy property (1) and (2) in the statement of the lemma. Now, using the properties of the $\|\cdot \|_2$ norms noted earlier, and the facts that $l\le r$ and $\sigma_{l+1}\le \sqrt{mn}/j$, we have
		\begin{align*}
		\|A-A_1\|_2 &=\biggl(\frac{1}{mn} \sum_{i=l+1}^r \sigma_i^2\biggr)^{1/2} \le\biggl( \frac{r\sigma_{l+1}^2}{mn}\biggr)^{1/2}\le \frac{\sqrt{r}}{j}.
		\end{align*}
		Again, as in the proof of \cite[Lemma 4]{c19}, we obtain
		\begin{align*}
		\|A_1-\tilde{A}_1\|_2
		&\le 3j^3 2^{-j}. 
		\end{align*}
		Combining, we get
		\[
		\|A-\tilde{A}_1\|_2 \le \sqrt{r}/j+3j^3 2^{-j}. 
		\]
		Now note that $\tilde{A}_1$ is constant within the blocks defined by the pair $(\cp_j,\cq_j)$. Thus, by \eqref{boxcontract},
		\begin{align*}
		\|A-A^{\cp_j, \cq_j}\|_2 &\le\|A-\tilde{A}_1\|_2 + \|\tilde{A}_1-A^{\cp_j, \cq_j}\|_2\\
		&\le \|A-\tilde{A}_1\|_2 + \|\tilde{A}_1^{\cp_j,\cq_j}-A^{\cp_j, \cq_j}\|_2 \le 2\|A-\tilde{A}_1\|_2.
		\end{align*}
		This completes the proof.
	\end{proof}

	We are now ready to prove Lemma \ref{lmm:l1lem}. 
	\begin{proof}[Proof of Lemma \ref{lmm:l1lem}]
	Let $r$ be a uniform upper bound on the rank of $M_k$. Lemma~\ref{szemlmm} tells us that for each $k$ and $j$, we can find a partition $\cp_{k,j}$ of $\{1,\ldots,m_k\}$ and a partition $\cq_{k,j}$ of $\{1,\ldots, n_k\}$ such that
		\begin{enumerate}
			\item $\cp_{k,j+1}$ is a refinement of $\cp_{k,j}$ and $\cq_{k,j+1}$ is a refinement of $\cq_{k,j}$, 
			\item $|\cp_{k,j}|$ and $|\cq_{k,j}|$ are bounded by $(2^{j+2}j)^{j^2}$, and 
			\item $\|M_k - M_k^{\cp_{k,j},\cq_{k,j}}\|_2\le 2\sqrt{r}/j+6j^3 2^{-j}$.
		\end{enumerate}
		To reduce notation, let us denote $M_k^{\cp_{k,j},\cq_{k,j}}$ by $M_{k,j}$. Following the proof of \cite[Theorem 1]{c19} and passing to a subsequence if necessary, we get that for every $j$, there exists a measurable function $W_j:[0,1]^2 \to [-1,1]$ such that $M^{\pi_k,\tau_k}_{k,j}\to W_j$ in  $L^2$ as $k\to\infty$, where $\pi_k$ and $\tau_k$ are permutations that depend only on $k$ (and not on $j$). Without loss of generality, let us assume $\pi_k$ and $\tau_k$ are identity permutations for each $k$.
		
		By construction, the block structure for $W_{j+1}$ is a refinement of the block structure for $W_j$. Also by construction, the value of $W_j$ in one of its blocks is the average value of $W_{j+1}$ within that block. From this, by a standard martingale argument (for example, as in the proof of \cite[Theorem 9.23]{lovaszbook}) it follows that $W_j$ converges pointwise almost everywhere to a function $W$ as $j\to\infty$. In particular, $W_j\to  W$ in $L^2$. We claim that $M_k\to W$ in $L^2$ as $k\to\infty$. To show this, take any $\ve >0$. Find $j$ so large that $\|W-W_j\|_2 \le \ve$ and  $2\sqrt{r}/j+6j^3 2^{-j}\le \ve$. Then for any $k$,
		\begin{align*}
		\|W-M_k\|_2 &\le \|W-W_j\|_2 + \|W_j - M_{k,j}\|_2 + \|M_{k,j}-M_k\|_2\\
		&\le \ve + \|W_j - M_{k,j}\|_2 +  2\sqrt{r}/j+6j^3 2^{-j}\\
		&\le 2\ve + \|W_j - M_{k,j}\|_2.
		\end{align*}
		Since $M_{k,j}\to W_j$ in $L^2$ as $k\to\infty$ and $\ve$ is arbitrary, this completes the proof. 
	\end{proof}

	Henceforth, let us work in the setting of Theorem \ref{candesthm}. For each $k$, let $P_k$ be the random binary matrix whose $(i,j)$-the entry is $1$ if the $(i,j)$-th entry of $M_k$ is revealed, and $0$ otherwise. Then note that as  functions on $[0,1]^2$, $\mathbb{E}(P_k)=f\circ M_k$, where $\ee(P_k)$ denotes the matrix of expected values of the entries of $P_k$.
	
	Recall the {\it cut norm} on the set of $m\times n$ matrices, as defined in \cite{c19}:
	\[
	\|A\|_\Box := \frac{1}{mn}\max\{|x^T A y| :x\in \rr^m,\, y\in \rr^n,\, \|x\|_\infty \le 1, \, \|y\|_\infty\le 1\},
	\]
	where $\|x\|_\infty$ denotes the $\ell^\infty$ norm of a vector $x$. 
	If $A$ is an $m\times n$ matrix and $W:[0,1]^2\to \rr$ is a measurable function, we define $d_\Box(A, W)$ to be $\|A-B\|_\Box$, where $B$ is the $m\times n$ matrix whose $(i,j)$-th entry is the average value of $W$ in the rectangle $(\frac{i-1}{m}, \frac{i}{m})\times (\frac{j-1}{n}, \frac{j}{n})$. 
	
	The following lemma shows that $P_k$ and $\ee(P_k)$ are close in cut norm.
	\begin{lmm}\label{plmm}
	As $k\to\infty$, $\|P_k-\mathbb{E}(P_k)\|_\Box \rightarrow 0$ in probability. 
	\end{lmm}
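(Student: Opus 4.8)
The plan is to control the cut norm of the centered random matrix $A := P_k - \ee(P_k)$ by combining a pointwise Hoeffding bound with a union bound over the vertices of the $\ell^\infty$-ball. Throughout, write $m = m_k$ and $n = n_k$, and work with $k$ fixed until the final limit. The entries $a_{ij}$ of $A$ are independent and mean-zero, and each takes one of the two values $-f(m_{ij})$ or $1 - f(m_{ij})$, so $a_{ij}$ lies in an interval of length one. The first step is to reduce the supremum in the definition of $\|\cdot\|_\Box$ to a finite maximum. For fixed $y$, the map $x \mapsto |x^T A y|$ is convex, hence maximized on $\{\|x\|_\infty \le 1\}$ at a vertex; the resulting function of $y$ is again convex, so it too is maximized at a vertex. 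Therefore
\[
\|A\|_\Box = \frac{1}{mn} \max_{x \in \{-1,1\}^m, \; y \in \{-1,1\}^n} |x^T A y|.
\]

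Next I would fix such a pair $(x,y)$ and apply a concentration inequality. The quantity $x^T A y = \sum_{i,j} x_i y_j a_{ij}$ is a sum of $mn$ independent, mean-zero random variables, each equal to $\pm a_{ij}$ and thus taking values in an interval of length one. Hoeffding's inequality then gives, for every $t > 0$,
\[
\pp(|x^T A y| \ge t) \le 2 \exp\left(-\frac{2 t^2}{mn}\right),
\]
since the sum of the squared interval-lengths is exactly $mn$.

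The third step is a union bound over the $2^{m+n}$ vertex pairs. Taking $t = \ve\, mn$ for a fixed $\ve > 0$ yields
\[
\pp(\|A\|_\Box \ge \ve) \le 2^{m+n} \cdot 2\exp\left(-2\ve^2 mn\right) = 2 \exp\left((m+n)\ln 2 - 2\ve^2 mn\right).
\]
Since $m, n \to \infty$, bounding $m + n \le 2\max(m,n)$ and $mn \ge \min(m,n)\max(m,n)$ shows the exponent is at most $2\max(m,n)\bigl(\ln 2 - \ve^2 \min(m,n)\bigr)$, which tends to $-\infty$. Hence $\pp(\|A\|_\Box \ge \ve) \to 0$ for every $\ve > 0$, which is precisely the asserted convergence in probability.

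There is no serious obstacle here; the only point to get right is the balance between the entropy of the vertex set, contributing the factor $e^{(m+n)\ln 2}$, and the Gaussian-type concentration, decaying like $e^{-2\ve^2 mn}$. Because the bilinear form is a sum of $mn$ independent terms rather than only $m+n$, the concentration rate dominates the entropy as soon as $\min(m,n)$ is large, and this quadratic-versus-linear gap is exactly what makes the crude union bound succeed. (If one wanted sharper constants or a high-probability rate one could invoke the matrix-analytic cut-norm estimates used in \cite{c19}, but for convergence in probability the elementary argument above suffices.)
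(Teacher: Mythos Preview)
Your proof is correct and takes a genuinely different route from the paper's. The paper bounds the cut norm by the operator norm via $\|A\|_\Box \le \|A\|_{op}/\sqrt{mn}$, and then invokes a matrix concentration result (\cite[Theorem~3.4]{usvt}) to control $\|P_k - \ee(P_k)\|_{op}$. Your argument bypasses the operator norm entirely: you reduce the cut-norm supremum to a maximum over the $2^{m+n}$ sign-vector pairs, apply Hoeffding's inequality pointwise, and take a union bound. This is more elementary and fully self-contained, at the cost of a slightly weaker quantitative rate; the paper's operator-norm route, by contrast, is shorter on the page but leans on a nontrivial external concentration theorem. For the qualitative statement here (convergence in probability), your approach is arguably the cleaner one.
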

	\begin{proof}
	It is easy to see from the definition of cut norm that for an $m \times n$ matrix $A$,
	$$\|A\|_\Box \leq \frac{\|A\|_{op}}{\sqrt{mn}},$$
	where $\|A\|_{op}$ is the $\ell^2$ operator norm of $A$. Now take any $t>0$. Using \cite[Theorem 3.4]{usvt}, $\mathbb{P}(\|P_k-\mathbb{E}(P_k)\| \geq 3\sqrt{n_k}) \leq C_1 e^{-C_2 n_k}$ for some positive universal constants $C_1$ and $C_2$. Hence, for $k$ large enough,
	\begin{align*}
	\mathbb{P}(\|P_k-\mathbb{E}(P_k)\|_\Box \geq t) &\leq \mathbb{P}(\|P_k-\mathbb{E}(P_k)\|_{op} \geq t\sqrt{m_k n_k})\\
	& \leq \mathbb{P}(\|P_k-\mathbb{E}(P_k)\|_{op} \geq 3\sqrt{n_k}) \leq C_1 e^{-C_2 n_k}.
	\end{align*}
	This shows that $\|P_k-\mathbb{E}(P_k)\|_\Box \to 0$ in probability as $k\to\infty$. 
	\end{proof}
	
Next, we relate the limiting empirical distribution of the entries of $M_k$ with the $L^2$ limit of $M_k$ as a function on $[0,1]^2$. In the following,  $\lambda$ denotes Lebesgue measure on $[0,1]^2$.
\begin{lmm}\label{mkwlmm1}
Suppose that $M_k \to W$ in $L^2$ as a sequence of functions on $[0,1]^2$. Then $\mu_k$ converges weakly to $\mu = \lambda \circ W^{-1}$. 
\end{lmm}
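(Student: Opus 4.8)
The plan is to recognize $\mu_k$ as the pushforward of Lebesgue measure $\lambda$ under the step-function representation of $M_k$, and then deduce weak convergence directly from the hypothesis $M_k\to W$ in $L^2$ by passing through convergence in measure. First I would observe that, under the convention identifying the matrix $M_k$ with a function on $[0,1]^2$, the value $m_{ij}$ is attained precisely on the rectangle $(\frac{i-1}{m_k},\frac{i}{m_k})\times(\frac{j-1}{n_k},\frac{j}{n_k})$, which has Lebesgue measure $1/(m_kn_k)$, while the union of all rectangle boundaries (where $M_k$ was set to zero) is $\lambda$-null. Hence $\lambda\circ M_k^{-1}=\mu_k$ exactly, and the target measure is $\mu=\lambda\circ W^{-1}$ by definition.

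By the change-of-variables formula for pushforwards, for any bounded measurable $g$ we have
\[
\int g\,d\mu_k = \int_{[0,1]^2} g(M_k)\,d\lambda, \qquad \int g\,d\mu = \int_{[0,1]^2} g(W)\,d\lambda.
\]
Since every $\mu_k$ and $\mu$ is supported in $[-1,1]$, to establish the weak convergence $\mu_k\Rightarrow\mu$ it suffices to show that $\int_{[0,1]^2} g(M_k)\,d\lambda \to \int_{[0,1]^2} g(W)\,d\lambda$ for every continuous $g:[-1,1]\to\rr$, each of which is automatically bounded and uniformly continuous.

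Next I would exploit that $L^2$ convergence on the probability space $([0,1]^2,\lambda)$ implies convergence in measure: by Chebyshev's inequality, $\lambda(|M_k-W|\ge\delta)\le \|M_k-W\|_2^2/\delta^2\to 0$ for every $\delta>0$. Fix $\ve>0$ and, using uniform continuity of $g$ on the compact interval $[-1,1]$, choose $\delta>0$ so that $|g(a)-g(b)|<\ve$ whenever $|a-b|<\delta$. Splitting the domain according to whether $|M_k-W|<\delta$ or not gives
\[
\int_{[0,1]^2}|g(M_k)-g(W)|\,d\lambda \;\le\; \ve + 2\|g\|_\infty\,\lambda\bigl(|M_k-W|\ge\delta\bigr).
\]
Letting $k\to\infty$ and then $\ve\to 0$ shows that $\int g(M_k)\,d\lambda\to\int g(W)\,d\lambda$, which is what was needed.

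There is no genuinely hard step here: the analytic core is the elementary combination of convergence in measure with uniform continuity of the test functions. The only point requiring care is the clean identification $\mu_k=\lambda\circ M_k^{-1}$, i.e.\ that the empirical distribution of the matrix entries coincides exactly with the law of the step function $M_k$ under $\lambda$ once the $\lambda$-null set of rectangle boundaries is discarded; once that bookkeeping is in place, the rest follows routinely.
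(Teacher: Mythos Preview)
Your proof is correct and follows essentially the same approach as the paper: identify $\mu_k=\lambda\circ M_k^{-1}$ via the step-function representation, then show $\int g(M_k)\,d\lambda\to\int g(W)\,d\lambda$ for bounded continuous $g$. The paper simply asserts this last convergence from ``$M_k\to W$ in $L^2$ and $g$ bounded continuous,'' whereas you supply the details via convergence in measure and uniform continuity, which is exactly the standard justification.
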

\begin{proof}
	Take any bounded continuous function $g:[-1,1]\to \rr$. It is not difficult to see that
	\[
	\int g d\mu_k = \iint g(M_k(x,y))dxdy.
	\]
	Since $M_k\to W$ in $L^2$ and $g$ is bounded and continuous, we get
\begin{align*}
\lim_{k\to\infty}  \iint g(M_k(x,y))dxdy &= \iint g(W(x,y)) dxdy.
\end{align*}
But the right side is the integral of $g$ with respect to the measure $\lambda\circ W^{-1}$. This completes the proof.
\end{proof}
The purpose of the next lemma is to investigate the convergence of $f\circ M_k$ under the hypotheses of Theorem \ref{candesthm}.
\begin{lmm}\label{mkwlmm2}
Suppose that $M_k \to W$ in $L^2$ as a sequence of functions on $[0,1]^2$. Let $\mu := \lambda \circ W^{-1}$. Suppose that $g:[-1,1]\to [0,1]$ is a measurable function which is continuous almost everywhere with respect to $\mu$. Then $g\circ M_k \to g\circ W$ in $L^2$. 
\end{lmm}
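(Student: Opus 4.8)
The plan is to reduce the desired $L^2$ convergence to convergence in measure, and then to run a continuous-mapping argument along almost-everywhere convergent subsequences.

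First I would exploit that $g$ takes values in $[0,1]$, so the sequence $g\circ M_k - g\circ W$ is uniformly bounded (by $1$ in absolute value) on the probability space $([0,1]^2,\lambda)$. For such a uniformly bounded sequence, convergence to $0$ in measure is equivalent to convergence to $0$ in $L^2$: splitting $\int (g\circ M_k - g\circ W)^2\, d\lambda$ over the set where the integrand is at most $\ve$ and the set where it exceeds $\ve$, and bounding the second piece using the uniform bound together with the measure of that set, gives $\limsup_k \int (g\circ M_k - g\circ W)^2\, d\lambda \le \ve$ for every $\ve>0$. Hence it suffices to prove that $g\circ M_k \to g\circ W$ in measure.

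Second, since $M_k\to W$ in $L^2$ we have $M_k\to W$ in measure, and I would invoke the standard characterization that a sequence converges in measure if and only if every subsequence contains a further subsequence converging almost everywhere. So, given any subsequence, pass to a further subsequence $(k_l)$ along which $M_{k_l}\to W$ $\lambda$-almost everywhere. Now comes the crux, where the continuity hypothesis enters. Let $D\subseteq[-1,1]$ be the set of points at which $g$ is discontinuous; this is an $F_\sigma$ set (the set where the oscillation of $g$ is positive), hence Borel, so $W^{-1}(D)$ is measurable. The assumption that $g$ is continuous $\mu$-almost everywhere is exactly the statement $\mu(D)=0$, and since $\mu=\lambda\circ W^{-1}$ this reads $\lambda(W^{-1}(D))=0$. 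Let $E$ be the set of points $(x,y)$ at which simultaneously $M_{k_l}(x,y)\to W(x,y)$ and $W(x,y)\notin D$; both defining conditions fail only on $\lambda$-null sets, so $\lambda(E)=1$. At each point of $E$ the function $g$ is continuous at $W(x,y)$ and $M_{k_l}(x,y)\to W(x,y)$, whence $g(M_{k_l}(x,y))\to g(W(x,y))$. Thus $g\circ M_{k_l}\to g\circ W$ $\lambda$-almost everywhere, and in particular in measure.

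Since every subsequence of $\{g\circ M_k\}$ therefore contains a further subsequence converging to $g\circ W$ in measure, the full sequence converges to $g\circ W$ in measure, and by the reduction in the first step this yields the claimed $L^2$ convergence. The only genuinely delicate point is the bookkeeping in the third step: correctly translating ``continuous $\mu$-almost everywhere'' into $\lambda(W^{-1}(D))=0$ via the change of variables $\mu=\lambda\circ W^{-1}$, and recording that the discontinuity set $D$ is Borel so that $W^{-1}(D)$ is measurable. Everything else is soft and standard.
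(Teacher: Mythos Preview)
Your proof is correct and follows essentially the same route as the paper's: pass to a subsequence along which $M_k\to W$ $\lambda$-a.e., use $\mu$-a.e.\ continuity of $g$ (equivalently $\lambda(W^{-1}(D))=0$) to get $g\circ M_{k_l}\to g\circ W$ $\lambda$-a.e., upgrade to $L^2$ by boundedness, and conclude via the subsequence principle. The only difference is cosmetic: you route through convergence in measure and spell out the measurability of the discontinuity set $D$, whereas the paper jumps directly from a.e.\ convergence plus boundedness to $L^2$ convergence along the subsubsequence.
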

\begin{proof}
Since $M_k\to W$ in $L^2$, any subsequence has a further subsequence along which $M_k(x,y) \to W(x,y)$ for $\lambda$-a.e.~$(x,y)$. By assumption, $g$ is continuous at $W(x,y)$ for $\lambda$-a.e.~$(x,y)$. Combining these two observations, we get that for any subsequence, there is a further subsequence along with $g\circ M_k(x,y) \to g\circ W(x,y)$ for $\lambda$-a.e.~$(x,y)$. Since $g$, $M_k$ and $W$ are all taking values in $[0,1]$, this implies that $g\circ M_k \to g\circ W$ in $L^2$ along this subsequence. This completes the proof.
\end{proof}
As a consequence of the above lemmas, we obtain the following result.
\begin{lmm}\label{pconvlmm}
Suppose that $M_k \to W$ in $L^2$ as a sequence of functions on $[0,1]^2$. Then, under the hypotheses of Theorem \ref{candesthm}, there is a measurable function $V:[0,1]^2 \to [0,1]$ that is nonzero almost everywhere and $d_\Box(P_k,V) \to 0$ in probability as $k\to \infty$.
\end{lmm}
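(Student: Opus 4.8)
The plan is to set $V := f\circ W$, where $f$ denotes the extension of the missingness function supplied by the hypotheses of Theorem \ref{candesthm}. Since $M_k\to W$ in $L^2$, Lemma \ref{mkwlmm1} gives that $\mu_k$ converges weakly to $\mu := \lambda\circ W^{-1}$; in particular $\mu$ is a subsequential weak limit of $\{\mu_k\}_{k\ge 1}$, so the hypotheses of Theorem \ref{candesthm} furnish a measurable extension $f:[-1,1]\to[0,1]$ that is nonzero and continuous $\mu$-almost everywhere. With this choice, $V=f\circ W$ is a measurable map from $[0,1]^2$ into $[0,1]$. To check that $V$ is nonzero $\lambda$-almost everywhere, I would use the change of variables $\mu=\lambda\circ W^{-1}$ to write
\[
\lambda(\{V=0\}) = \lambda\bigl(W^{-1}(\{f=0\})\bigr) = \mu(\{f=0\}) = 0,
\]
the last equality being exactly the $\mu$-a.e.\ nonvanishing of $f$.

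It then remains to prove that $d_\Box(P_k,V)\to 0$ in probability. Writing $B_k$ for the $m_k\times n_k$ matrix obtained by block-averaging $V$ over the grid rectangles, so that by definition $d_\Box(P_k,V)=\|P_k-B_k\|_\Box$, I would split using the triangle inequality for the cut norm:
\[
\|P_k-B_k\|_\Box \le \|P_k-\ee(P_k)\|_\Box + \|\ee(P_k)-B_k\|_\Box.
\]
The first term tends to $0$ in probability by Lemma \ref{plmm}. For the second, deterministic term I would use the elementary bound $\|A\|_\Box\le\|A\|_2$, valid for every matrix $A$ by Cauchy--Schwarz (indeed $\|A\|_\Box\le\frac{1}{mn}\sum_{ij}|a_{ij}|\le\|A\|_2$), to reduce matters to showing $\|\ee(P_k)-B_k\|_2\to 0$. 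Recalling that $\ee(P_k)=f\circ M_k$ as functions on $[0,1]^2$, Lemma \ref{mkwlmm2} applied with $g=f$ gives $f\circ M_k\to f\circ W=V$ in $L^2$; and since block-averaging is an $L^2$-contraction (as in \eqref{boxcontract}) whose output converges to the averaged function as the mesh shrinks --- by density of continuous functions together with the contraction property --- we also have $B_k\to V$ in $L^2$. Combining these two convergences yields $\|\ee(P_k)-B_k\|_2\to 0$, and hence $d_\Box(P_k,V)\to 0$ in probability.

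I expect the main obstacle to be the second term, where the random binary matrix $P_k$ must be compared against the limiting function $V$ only through its discretization $B_k$. The delicate point is to secure the two $L^2$ convergences $f\circ M_k\to V$ and $B_k\to V$ at once: the former relies on the $\mu$-a.e.\ continuity of $f$, which is precisely what makes the composition $f\circ M_k$ stable under the $L^2$ convergence $M_k\to W$ (this is the content of Lemma \ref{mkwlmm2}), while the latter relies on block-averages of a fixed $L^2$ function converging back to it as the grid refines. The inequality $\|\cdot\|_\Box\le\|\cdot\|_2$ is the bridge that lets these $L^2$ statements control the cut norm, which is otherwise awkward to estimate directly against a discretized function.
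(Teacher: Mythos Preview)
Your proposal is correct and follows essentially the same route as the paper: take $V=f\circ W$, use Lemma~\ref{mkwlmm1} and the hypotheses of Theorem~\ref{candesthm} to obtain the appropriate extension of $f$, verify $V\ne 0$ $\lambda$-a.e.\ via $\mu=\lambda\circ W^{-1}$, reduce to $d_\Box(\ee(P_k),V)\to 0$ via Lemma~\ref{plmm}, and then deduce this from $f\circ M_k\to f\circ W$ in $L^2$ via Lemma~\ref{mkwlmm2}. The only difference is that the paper dismisses the passage from $L^2$ convergence to $d_\Box$ convergence as ``not hard to see,'' whereas you spell it out through the inequality $\|\cdot\|_\Box\le\|\cdot\|_2$ and the auxiliary convergence $B_k\to V$ in $L^2$.
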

\begin{proof}
By Lemma \ref{plmm}, it suffices to show that $d_\Box(\ee(P_k),V)\to 0$ for some $V$ as in the statement of the lemma. By Lemma \ref{mkwlmm1}, $\mu_k$ converges weakly to $\mu=\lambda \circ W^{-1}$. By the hypotheses of Theorem \ref{candesthm}, there is a measurable extension of $f$ to $[-1,1]$, also denoted by $f$, which is nonzero and continuous $\mu$-a.e. As noted earlier, $\ee(P_k) = f\circ M_k$. Therefore, by Lemma \ref{mkwlmm2}, $\ee(P_k)\to f\circ W$ in $L^2$. It is not hard to see that this implies that $d_\Box(\ee(P_k), f\circ W)\to 0$. But $f\circ W$ is nonzero $\lambda$-a.e. Thus, we can take $V= f\circ W$.
\end{proof}

We are now ready to prove Theorem \ref{candesthm}.
\begin{proof}
Suppose that $\hat{M}_k$ is not a consistent sequence of estimators. Then, passing to a subsequence if necessary, we may assume that 
\begin{align}\label{mkineq}
\inf_{k\ge 1}  \ee\|\hat{M}_k - M_k\|_2^2 >0.
\end{align}
Note that this condition continues to hold true if we pass to further subsequences and permute rows and columns in each $M_k$, which we will do shortly. Passing to a further subsequence, and permuting rows and columns in each $M_k$ if necessary, we use Lemma \ref{lmm:l1lem} to get an $L^2$ limit $W$ of $M_k$ as $k\to \infty$. Then, by Lemma \ref{pconvlmm}, there is a measurable function $V:[0,1]^2 \to [0,1]$ that is nonzero almost everywhere and $d_\Box(P_k,V) \to 0$ in probability as $k\to \infty$. Again passing to a subsequence, we get that $d_\Box(P_k,V) \to 0$ almost surely. But this implies, by \cite[Theorem 2 and Theorem 3]{c19}, that $\|\hat{M}_k - M_k\|_2\to 0$ almost surely. Since the entries of $M_k$ and $\hat{M}_k$ are in $[-1,1]$ for all $k$, this contradicts \eqref{mkineq}. 
\end{proof}


\section{Proof of Theorem \ref{usvtthm}}
Without loss of generality, suppose that $m_k\le n_k$ for each $k$. (Otherwise, we can just transpose the matrices.) Let $r$ be a uniform upper bound on the rank of $M_k$. Let $R_k$ be the matrix obtained by applying $f$ entrywise to $M_k$. Let $Q_k$ be the entrywise (i.e., Hadamard) product of $M_k$ and $R_k$. Let $Y_k$ be the matrix obtained by replacing the unrevealed entries of $X_k$ by zero. Let $P_k$ be the matrix whose $(i,j)$-th entry is $1$ if the $(i,j)$-th entry of $X_k$ is revealed, and $0$ otherwise. Note that $\ee(Y_k)=Q_k$ and $\ee(P_k)=R_k$. Note also that the entries of $Y_k$ and $P_k$ are all in $[-1,1]$. 

First, let us assume that $\mu_k$ converges weakly to a limit $\mu$ as $k\to \infty$. Then by the hypotheses of Theorem \ref{usvtthm}, $f$ has an extension to a Lipschitz function on $[-1,1]$, also called $f$, which has no zeros in the support of $\mu$.  Let us fix such an extension, and let $L$ denote its Lipschitz constant. 
\begin{lmm}\label{rlmm}
As $k\to\infty$, $\|R_k\|_*= o(m_k\sqrt{n_k})$. 
\end{lmm}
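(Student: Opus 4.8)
The plan is to exploit the continuity of $f$ (which holds since $f$ is Lipschitz) together with the uniformly bounded rank of the $M_k$, the point being that applying a continuous function entrywise to a low-rank matrix produces a matrix that is \emph{approximately} low rank. First I would fix $\ve>0$ and invoke the Weierstrass approximation theorem to choose a polynomial $p(x)=\sum_{l=0}^d a_l x^l$ with $\sup_{x\in[-1,1]}|f(x)-p(x)|\le \ve$. Writing $p\circ M_k$ and $(f-p)\circ M_k$ for the matrices obtained by applying $p$ and $f-p$ entrywise to $M_k$, I would split $R_k = f\circ M_k = p\circ M_k + (f-p)\circ M_k$ and bound the two nuclear norms separately. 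Throughout I use the elementary inequality $\|A\|_* = \sum_i\sigma_i \le \sqrt{\rank(A)}\,(\sum_i\sigma_i^2)^{1/2} = \sqrt{\rank(A)}\,\sqrt{m_kn_k}\,\|A\|_2$, which follows from Cauchy--Schwarz.

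For the polynomial part, the entrywise $l$-th power of $M_k$ is the Hadamard product of $l$ copies of $M_k$, so by submultiplicativity of rank under Hadamard products its rank is at most $r^l$; hence $\rank(p\circ M_k)\le \sum_{l=0}^d r^l =: N(\ve)$, a number depending only on $r$ and the degree $d$ of $p$, and in particular \emph{independent of $k$}. Since the entries of $p\circ M_k$ all lie in $[-1-\ve,\,1+\ve]$, we have $\|p\circ M_k\|_2\le 1+\ve$, so
\[
\|p\circ M_k\|_* \le \sqrt{N(\ve)}\,\sqrt{m_k n_k}\,(1+\ve) = O\bigl(\sqrt{m_k n_k}\bigr),
\]
where the implied constant depends on $\ve$ but not on $k$.

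For the remainder, the matrix $(f-p)\circ M_k$ has all entries bounded by $\ve$ in absolute value, so $\|(f-p)\circ M_k\|_2\le \ve$, and its rank is at most $m_k$ (as $m_k\le n_k$). The same inequality then gives $\|(f-p)\circ M_k\|_* \le \sqrt{m_k}\cdot\sqrt{m_k n_k}\,\ve = \ve\, m_k\sqrt{n_k}$. Combining the two bounds, dividing by $m_k\sqrt{n_k}$, and using $m_k\to\infty$,
\[
\limsup_{k\to\infty} \frac{\|R_k\|_*}{m_k\sqrt{n_k}} \le \limsup_{k\to\infty}\Bigl(\frac{\sqrt{N(\ve)}\,(1+\ve)}{\sqrt{m_k}} + \ve\Bigr) = \ve,
\]
and since $\ve>0$ is arbitrary, $\|R_k\|_* = o(m_k\sqrt{n_k})$.

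The main obstacle — indeed the crux that makes the whole argument work — is controlling the rank of $p\circ M_k$ \emph{uniformly in $k$}; this is exactly where the uniformly bounded rank of $\{M_k\}$ enters, through the Hadamard rank bound. Once $\ve$ is fixed, the rank bound $N(\ve)$ is a constant, so the factor $1/\sqrt{m_k}$ annihilates the low-rank contribution in the limit while the uniform approximation controls the remainder. The only delicate bookkeeping is the order of limits: one must fix $\ve$ (and hence the degree $d$ and the rank bound $N(\ve)$) \emph{before} sending $k\to\infty$, so that $N(\ve)$ does not grow along the limit, and only afterwards let $\ve\to 0$. Note that the weak-convergence hypothesis on $\mu_k$ and the location of the zeros of $f$ play no role here; this lemma is a purely deterministic statement requiring only that $f$ be continuous and that $\{M_k\}$ have bounded rank.
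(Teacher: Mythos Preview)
Your proof is correct and takes a genuinely different route from the paper's. Both arguments share the same skeleton: fix $\ve>0$, decompose $R_k$ into a piece whose rank is bounded independently of $k$ and a piece that is small in $\|\cdot\|_2$, apply the Cauchy--Schwarz bound $\|A\|_*\le\sqrt{\rank(A)}\sqrt{m_kn_k}\,\|A\|_2$ to each, and let $\ve\to0$ at the end. The difference is in how the low-rank surrogate is produced. The paper first approximates $M_k$ in $\|\cdot\|_2$ by a block matrix $B_k$ with a bounded number of blocks (invoking a regularity lemma from \cite{c19}), and then uses the Lipschitz property of $f$ to transfer this approximation to $R_k=f\circ M_k$, with $D_k=f\circ B_k$ serving as the low-rank piece. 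You instead approximate $f$ uniformly by a polynomial $p$ via Weierstrass, and use the Hadamard rank inequality $\rank(A\odot B)\le\rank(A)\rank(B)$ to bound $\rank(p\circ M_k)$ by a constant depending only on $r$ and $\deg p$.

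Your approach is arguably more self-contained: it uses only classical facts (Weierstrass, the Hadamard rank bound) and does not rely on the block approximation lemma from~\cite{c19}. It also needs only continuity of $f$ rather than a Lipschitz bound; as you note, neither the weak-convergence hypothesis nor the nonvanishing of $f$ enters. The paper's route, on the other hand, produces an explicit dependence on the Lipschitz constant $L$ in the bound and reuses the same block matrix $B_k$ in the proof of the companion Lemma~\ref{qlmm}. Your argument extends to that lemma just as easily, since $Q_k=(xf(x))\circ M_k$ and $x\mapsto xf(x)$ is again continuous on $[-1,1]$.
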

\begin{proof}
	Fix $\varepsilon >0$. It is an easy consequence of the Cauchy--Schwarz inequality that for any $k$,
	\[
	\|M_k\|_*\le \|M_k\|_2  \sqrt{\rank(M_k) m_kn_k}\le \sqrt{rm_kn_k}.
	\]
	By \cite[Lemma 2]{c19}, this implies that there is a block matrix $B_k$ with at most $b$ blocks, where $b$ depends only on $\ve$ and $r$, and entries in $[-1,1]$, such that $\|M_k - B_k\|_2 \le \ve$. Let $D_k$ be obtained by applying $f$ to $B_k$ entrywise. Then by the Lipschitz property of $f$, we get
	\[
	\|R_k - D_k\|_2\le \ve L. 
	\]
	Note that just like $B_k$, $D_k$ has at most $b$ blocks. In particular, $\rank(D_k)\le b$. Therefore again by the Cauchy--Schwarz inequality,
	\begin{align*}
	\|R_k\|_* &\le \|R_k-D_k\|_* + \|D_k\|_*\\
	&\le \|R_k - D_k\|_2 \sqrt{\rank(R_k-D_k)m_kn_k} + \|D_k\|_2  \sqrt{\rank(D_k) m_kn_k}\\
	&\le \ve L m_k\sqrt{n_k}  + \sqrt{bm_kn_k}.
	\end{align*}
	Thus, 
	\[
	\limsup_{k\to \infty} \frac{\|R_k\|_*}{m_k \sqrt{n_k}} \le \ve L.
	\]
	Since this holds for arbitrary $\ve >0$, this completes the proof.
	\end{proof}

\begin{lmm}\label{qlmm}
As $k\to\infty$, $\|Q_k\|_*= o(m_k\sqrt{n_k})$. 
\end{lmm}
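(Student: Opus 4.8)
The plan is to observe that $Q_k$ is itself the entrywise application of a \emph{single} scalar function to $M_k$, which reduces the lemma to exactly the statement already established for $R_k$ in Lemma \ref{rlmm}. Indeed, since $R_k = f\circ M_k$ and $Q_k$ is the Hadamard product of $M_k$ and $R_k$, the $(i,j)$-th entry of $Q_k$ is $m_{ij}\, f(m_{ij})$. Hence $Q_k = g\circ M_k$, where $g:[-1,1]\to[-1,1]$ is defined by $g(x):= x f(x)$. So it suffices to repeat the proof of Lemma \ref{rlmm} with $f$ replaced by $g$, once we check that $g$ enjoys the two properties of $f$ that were actually used there.

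Those two properties are boundedness by $1$ and the Lipschitz property. Boundedness is immediate: $|x|\le 1$ and $f(x)\in[0,1]$ give $|g(x)|\le 1$. For the Lipschitz bound, for any $x,y\in[-1,1]$ I would write
\[
|g(x)-g(y)| = |x f(x) - y f(y)| \le |x|\,|f(x)-f(y)| + |f(y)|\,|x-y| \le (L+1)\,|x-y|,
\]
using $|x|\le 1$, $|f(y)|\le 1$, and the assumption that $f$ is Lipschitz with constant $L$. Thus $g$ is Lipschitz with constant at most $L+1$ and takes values in $[-1,1]$.

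With these facts the remaining argument is identical to the proof of Lemma \ref{rlmm}. Fixing $\ve>0$, I would use the bound $\|M_k\|_*\le \sqrt{r m_k n_k}$ together with \cite[Lemma 2]{c19} to produce a block matrix $B_k$ with at most $b=b(\ve,r)$ blocks and entries in $[-1,1]$ such that $\|M_k-B_k\|_2\le \ve$. Letting $G_k$ be the entrywise application of $g$ to $B_k$, the Lipschitz bound gives $\|Q_k-G_k\|_2\le (L+1)\ve$, and $G_k$ inherits the block structure of $B_k$, so $\rank(G_k)\le b$. The Cauchy--Schwarz inequality relating the nuclear norm to $\|\cdot\|_2$ then yields, using $\rank(Q_k-G_k)\le m_k$ and $\|G_k\|_2\le 1$,
\[
\|Q_k\|_* \le \|Q_k-G_k\|_2\sqrt{\rank(Q_k-G_k)\,m_k n_k} + \|G_k\|_2\sqrt{\rank(G_k)\,m_k n_k} \le (L+1)\ve\, m_k\sqrt{n_k} + \sqrt{b\,m_k n_k}.
\]
Dividing by $m_k\sqrt{n_k}$ and letting $k\to\infty$ sends the second term to $0$ since $m_k\to\infty$, giving $\limsup_{k}\|Q_k\|_*/(m_k\sqrt{n_k})\le (L+1)\ve$; as $\ve$ is arbitrary, the lemma follows. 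There is essentially no genuine obstacle: the entire content is recognizing that $Q_k=g\circ M_k$ for the single function $g(x)=xf(x)$, and the only mildly non-routine point is verifying that this product is Lipschitz, which is exactly where the boundedness of both factors enters.
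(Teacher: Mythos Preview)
Your proof is correct. The overall strategy coincides with the paper's: approximate $M_k$ by a block matrix $B_k$ via \cite[Lemma 2]{c19}, approximate $Q_k$ by the corresponding block matrix, and then bound the nuclear norm via Cauchy--Schwarz. In fact your approximating matrix $G_k=g\circ B_k$ is literally the paper's $E_k=B_k\odot D_k$, since $D_k=f\circ B_k$.

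The one genuine difference is in how the bound $\|Q_k-E_k\|_2\le (L+1)\ve$ is obtained. The paper introduces the intermediate matrix $F_k=B_k\odot R_k$ and uses a two-step triangle inequality, bounding each piece by exploiting that Hadamard multiplication by a matrix with entries in $[-1,1]$ is a contraction for $\|\cdot\|_2$. You instead observe at the outset that $Q_k=g\circ M_k$ for the single scalar function $g(x)=xf(x)$, check that $g$ is $(L+1)$-Lipschitz, and get the same bound in one stroke. This is a small but tidy simplification: it makes the parallel with Lemma~\ref{rlmm} exact rather than merely analogous, and it avoids introducing the auxiliary matrix $F_k$.
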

\begin{proof}
Let $B_k$, $b$, and $D_k$ be as in Lemma \ref{qlmm}. Let $E_k$ be the Hadamard product of $B_k$ and $D_k$, and $F_k$ be the Hadamard product of $B_k$ and $R_k$. Then $E_k$ also has $b$ blocks. Moreover, since the entries of all these matrices are in $[-1,1]$, it is not hard to see that 
\begin{align*}
\|Q_k - E_k\|_2 &\le \|Q_k - F_k\|_2 + \|F_k - E_k\|_2\\
&\le  \|M_k - B_k\|_2 + \|R_k - D_k\|_2\\
&\le (L+1)\ve.
\end{align*}
The rest of the proof is the same as the proof of Lemma \ref{rlmm}, with $R_k$ replaced by $Q_k$ and $D_k$ replaced by $E_k$.
\end{proof}
As a consequence of the above lemmas, we obtain the following result.
\begin{lmm}\label{usvtlmm}
Let $\hat{Q}_k$ and $\hat{R}_k$ be the estimates of $Q_k$ and $R_k$ obtained by applying the USVT algorithm to $Y_k$ and $P_k$. Then $\ee\|\hat{Q}_k - Q_k\|_2^2 \to 0$ and $\ee\|\hat{R}_k - R_k\|_2^2 \to 0$ as $k\to \infty$. 
\end{lmm}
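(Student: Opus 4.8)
The plan is to reduce Lemma \ref{usvtlmm} to the mean-squared-error guarantee for the USVT procedure from \cite{usvt}, feeding it the nuclear-norm bounds already established in Lemmas \ref{rlmm} and \ref{qlmm}. First I would check that both matrices to which the algorithm is applied satisfy the hypotheses of \cite{usvt}. The $(i,j)$-th entry of $Y_k$ equals $x_{ij}$ times the indicator that $(i,j)$ is revealed, and these are independent across $(i,j)$ because the noise variables and the revelation events are mutually independent; likewise the entries of $P_k$ are independent Bernoulli indicators. All these entries lie in $[-1,1]$, and their mean matrices are exactly $Q_k$ and $R_k$, as recorded above. Thus $\hat{Q}_k$ and $\hat{R}_k$ are bona fide USVT estimates of the mean matrices of noisy matrices meeting the assumptions of \cite{usvt}.

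Next I would quote the USVT estimation-error bound in a normalized form. For an $m\times n$ matrix $A$ with $m\le n$, independent entries in $[-1,1]$, and mean matrix $\bar{A}$, the USVT estimate $\hat{A}$ with threshold $(2+\eta)\sqrt{n}$, truncated to $[-1,1]$, satisfies
\[
\ee\|\hat{A}-\bar{A}\|_2^2 \le C(\eta)\,\frac{\|\bar{A}\|_*}{m\sqrt{n}} + \delta(m,n),
\]
where $C(\eta)$ depends only on $\eta$ and $\delta(m,n)\to 0$ as $m,n\to\infty$. This is precisely how the USVT analysis proceeds: the deterministic singular-value truncation bound controls $\|\hat{A}-\bar{A}\|_2^2$ by a constant multiple of $\|\bar{A}\|_*$ times the threshold $(2+\eta)\sqrt{n}$, divided by $mn$, on the event that the operator norm of the centered matrix $A-\bar{A}$ stays below the threshold; the complement of this event has probability at most $C_1 e^{-C_2 n}$ by \cite[Theorem 3.4]{usvt}. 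Dividing by $mn$ produces the displayed main term $C(\eta)\|\bar{A}\|_*/(m\sqrt{n})$, the contribution of the low-probability event is at most $4C_1 e^{-C_2 n}$ since $\|\hat{A}-\bar{A}\|_2^2\le 4$ always, and the remaining fluctuation terms are of order $1/m$; all of these are absorbed into $\delta(m,n)$.

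Applying this with $(A,\bar{A})=(P_k,R_k)$ and dimensions $m_k\le n_k$ gives
\[
\ee\|\hat{R}_k-R_k\|_2^2 \le C(\eta)\,\frac{\|R_k\|_*}{m_k\sqrt{n_k}} + \delta(m_k,n_k).
\]
Lemma \ref{rlmm} asserts precisely that $\|R_k\|_*=o(m_k\sqrt{n_k})$, so the first term tends to $0$; since $m_k,n_k\to\infty$, the second term tends to $0$ as well, whence $\ee\|\hat{R}_k-R_k\|_2^2\to 0$. The identical argument with $(A,\bar{A})=(Y_k,Q_k)$, invoking Lemma \ref{qlmm} in place of Lemma \ref{rlmm}, yields $\ee\|\hat{Q}_k-Q_k\|_2^2\to 0$.

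The substantive content has already been discharged in Lemmas \ref{rlmm} and \ref{qlmm}, so the remaining work here is essentially bookkeeping. I expect the only point requiring genuine care to be confirming that the error term $\delta(m,n)$ in the USVT guarantee vanishes whenever \emph{both} dimensions grow, rather than merely when $m/n\to 0$ (which is not assumed in our setting); this is settled by reading off the explicit form of the bound in \cite{usvt}, where the lower-order terms are $O(1/m)$ and $O(e^{-cn})$ and therefore vanish under $m_k,n_k\to\infty$ alone.
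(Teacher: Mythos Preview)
Your proposal is correct and follows exactly the paper's approach: the paper's proof is a single sentence citing Lemmas \ref{rlmm} and \ref{qlmm} together with \cite[Theorem~1.1]{usvt}, and you have simply spelled out the verification of the hypotheses and the form of the USVT bound in more detail. No gap and no substantive difference.
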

\begin{proof}
This is a direct consequence of Lemmas \ref{rlmm} and \ref{qlmm} and the consistency of the USVT estimator~\cite[Theorem 1.1]{usvt}.
\end{proof}

Let us now prove Theorem \ref{usvtthm} under the simplifying assumption under which we are currently working. Let $\hat{M}_k$ denote the modified USVT estimator. Let $m_{kij}$ denote the $(i,j)$-th element of $M_k$, $\hat{m}_{kij}$ denote the $(i,j)$-th element of $\hat{M}_k$, etc.

Since $f$ is nonzero and continuous on the support of $\mu$, and the support is a compact set, there exists $\delta>0$ such that $f> \delta$ everywhere on the support of $\mu$. In particular, $\mu(\{x: f(x)\le \delta\}) = 0$. Since $\mu_k\to \mu$ weakly, and $\{x:f(x)\le \delta\}$ is a closed set due to the continuity of $f$, we get
\begin{align*}
\limsup_{k\to \infty} \mu_k(\{x: f(x)\le \delta\}) \le \mu(\{x: f(x)\le \delta\}) = 0.
\end{align*} 
In other words, if we let $I_k:= \{(i,j): r_{kij}\le \delta\}$, then $|I_k| = o(m_kn_k)$ as $k\to\infty$. 

Let $J_k := \{(i,j): \hat{r}_{kij} \le \delta/2\}$. Then 
\begin{align*}
|J_k| &\le |I_k| + |\{(i,j): |\hat{r}_{kij} - r_{kij}| > \delta/2\}|\\
&\le |I_k| + \frac{4}{\delta^2}\sum_{i,j} (\hat{r}_{kij} - r_{kij})^2\\
&= |I_k| + \frac{4m_kn_k}{\delta^2} \|\hat{R} - R\|_2^2.
\end{align*}
By Lemma \ref{usvtlmm} and the fact that $|I_k| = o(m_kn_k)$, this shows that $|J_k|=o_P(m_kn_k)$ as $k\to\infty$ (meaning that $|J_k|/(m_kn_k) \to 0$ in probability as $k\to\infty$). 

Now take $(i,j)\notin I_k \cup J_k$. Then 
\begin{align*}
\biggl|\frac{\hat{q}_{kij}}{\hat{r}_{kij}} - m_{kij} \biggr| &= \biggl|\frac{\hat{q}_{kij}}{\hat{r}_{kij}} - \frac{q_{kij}}{r_{kij}} \biggr|\\
&\le \frac{|\hat{q}_{kij} - q_{kij}|}{\hat{r}_{kij}} + \frac{|q_{kij}||\hat{r}_{kij} - r_{kij}|}{\hat{r}_{kij} r_{kij}}\\
&\le \frac{2}{\delta} |\hat{q}_{kij} - q_{kij}| + \frac{2}{\delta^2}|\hat{r}_{kij} - r_{kij}|.
\end{align*}
Since $\hat{m}_{kij}$ is obtained by truncating $\hat{q}_{kij}/\hat{r}_{kij}$, the above upper bound also holds for $|\hat{m}_{kij} - m_{kij}|$ when $(i,j)\notin I_k \cup J_k$. But $|\hat{m}_{kij} - m_{kij}|\le 2$ for any $(i,j)$. Thus,
\begin{align*}
\sum_{i,j} (\hat{m}_{kij} - m_{kij})^2 &\le 4|I_k\cup J_k| + \sum_{(i,j)} \biggl( \frac{2}{\delta} |\hat{q}_{kij} - q_{kij}| + \frac{2}{\delta^2}|\hat{r}_{kij} - r_{kij}|\biggr)^2\\
&\le  4|I_k\cup J_k| + \frac{8}{\delta^2} \sum_{i,j} (\hat{q}_{kij} - q_{kij})^2 \\
&\qquad + \frac{8}{\delta^4} \sum_{i,j} ( \hat{r}_{kij} - r_{kij})^2.
\end{align*}
By Lemma \ref{usvtlmm} and our previous deduction that $|I_k\cup J_k| = o_P(m_kn_k)$, the above inequality shows that $\|\hat{M}_k - M_k\|_2 \to 0$ in probability as $k\to \infty$. Since this is a uniformly bounded sequence of random variables, this proves the consistency of $\hat{M}_k$. This proves Theorem \ref{usvtthm} under the simplifying assumption that $\mu_k$ converges weakly to some $\mu$ as $k\to\infty$. We are now ready to prove Theorem~\ref{usvtthm} in full generality.
\begin{proof}[Proof of Theorem \ref{usvtthm}]
Let $\hat{M}_k$ be the modified USVT estimator of $M_k$. Suppose that $\{\hat{M}_k\}_{k\ge 1}$ is not a consistent sequence of estimators. Passing to a subsequence if necessary, we may assume that
\begin{align}\label{mkineq2}
\inf_{k\ge 1}  \ee\|\hat{M}_k - M_k\|_2^2 >0.
\end{align}
Note that this will continue to hold true if we pass to further subsequences. Passing to a further subsequence, we may assume that $\mu_k$ converges weakly to some $\mu$. But then we already know that \eqref{mkineq2} is violated. This completes the proof of the theorem.
\end{proof}

\section{Proof of Theorem \ref{fthm}}
	In this proof, $C$ will denote any universal constant, whose value may change from line to line. Let $[x,y)$ be a subinterval of $[-2,2]$. Let $p_{ij}=1$ if the $(i,j)$-th entry of $M$ is revealed and $0$ otherwise. Let \[
S_{x,y} := \{(i,j) : m_{ij}\in [x,y)\}, \ \ T_{x,y} := \{(i,j): \hat{m}_{ij}\in [x,y)\},
\]
and let 
	\[
	\hat{f}_{x,y} := \frac{1}{|T_{x,y}|} \sum_{(i,j)\in T_{x,y}} p_{ij}, \ \ g_{x,y} := \frac{1}{|S_{x,y}|} \sum_{(i,j)\in S_{x,y}} p_{ij}
	\]
	where the right sides are declared to be zero if the corresponding sums are empty. Note that $\hat{f}_{x,y}$ and $g_{x,y}$ are always in $[0,1]$. 
	Take some $\delta < (y-x)/2$, to be chosen later. Let 
	\begin{align}\label{mudef}
	\mu_{x,y} := \frac{1}{mn}|\{(i,j): m_{ij}\in [a-\delta, a+\delta]\cup[b-\delta, b+\delta]\}|.
	\end{align}
Take any $(i,j)\in T_{x,y}\setminus S_{x,y}$. There are two cases. First suppose that $m_{ij}\notin [x-\delta, y+\delta]$. Since $(i,j)\in T_{x,y}$, we have $\hat{m}_{ij}\in [x,y)$, and hence in this case, $|\hat{m}_{ij} - m_{ij}|>\delta$. By Markov's inequality, the number of such $(i,j)$ is bounded above by 
	\begin{align}\label{msedelta}
	\frac{1}{\delta^2}\sum_{i=1}^m \sum_{j=1}^n (m_{ij} - \hat{m}_{ij})^2.
	\end{align}
	The second case is that $m_{ij}\in [x-\delta, x) \cup [y, y+\delta]$. By the definition of $\mu_{x,y}$, the number of such $(i,j)$ is at most $mn\mu_{x,y}$. Combining, we get that
	\[
	|T_{x,y}\setminus S_{x,y}|\le \frac{1}{\delta^2}\sum_{i=1}^m \sum_{j=1}^n (m_{ij} - \hat{m}_{ij})^2 + mn\mu_{x,y}.
	\]
	Now take any $(i,j)\in S_{x,y}\setminus T_{x,y}$. Then, again, there are two cases. First, suppose that $m_{ij}\in [x+\delta, y-\delta]$. Since $\hat{m}_{ij}\notin [x,y)$, in this case we have that $|\hat{m}_{ij}-m_{ij}|> \delta$. Thus, by Markov's inequality, the number of such $(i,j)$ is bounded above by \eqref{msedelta}. The other case is $m_{ij}\in [x, x+\delta)\cup (y-\delta, y)$. As before, the number of such $(i,j)$ is bounded above by $mn\mu_{x,y}$. Combining these two observations, we get that 
	\begin{align}\label{tdeltas}
	\ee|T_{x,y}\Delta S_{x,y}| &\le \frac{2mn\theta}{\delta^2} + 2mn\mu_{x,y}. 
	\end{align}
	We will now work under the assumption that $S_{x,y}\ne \emptyset$. The final estimate will be valid even if $S_{x,y}=\emptyset$. 
	First, note that
	\begin{align*}
	\var(g_{x,y})  &= \frac{1}{|S_{x,y}|^2} \sum_{(i,j)\in S_{x,y}} \var(Y_{ij}) \le \frac{1}{4|S_{x,y}|}. 
	\end{align*}
	Let $f_{x,y} :=\ee(g_{x,y})$. Then the above bound can be written as 
	\begin{align}\label{fbd1}
	|S_{x,y}| \ee[(g_{x,y}-f_{x,y})^2] \le \frac{1}{4}.
	\end{align}
	Clearly, the above bound holds even if $S_{x,y}=\emptyset$. 
	Next, note that 
	\begin{align*}
	|g_{x,y} - \hat{f}_{x,y} | &\le \frac{1}{|S_{x,y}|} \biggl|\sum_{(i,j)\in S_{x,y}} Y_{ij} - \sum_{(i,j)\in T_{x,y}} Y_{ij}\biggr| \\
	&\qquad + \biggl|\frac{1}{|S_{x,y}|}-\frac{1}{|T_{x,y}|}\biggr| \sum_{(i,j)\in T_{x,y}} Y_{ij}\\
	&\le \frac{1}{|S_{x,y}|} \sum_{(i,j)\in T_{x,y}\Delta S_{x,y}} Y_{ij} + \frac{||T_{x,y}|-|S_{x,y}||}{|S_{x,y}|}\\
	&\le \frac{2|T_{x,y}\Delta S_{x,y}|}{|S_{x,y}|}. 
	\end{align*}
	This shows, by \eqref{tdeltas} and the fact that $\hat{f}_{x,y}$ and $g_{x,y}$ are both in $[0,1]$, that  
	\begin{align}
	|S_{x,y}|\ee[(\hat{f}_{x,y} - g_{x,y})^2] &\le |S_{x,y}|\ee|\hat{f}_{x,y} - g_{x,y}| \notag\\
	&\le 2\ee|T_{x,y}\Delta S_{x,y}| \le  \frac{4mn\theta}{\delta^2} + 4mn\mu_{x,y}.\label{fbd2}
	\end{align}
	Again, this bound holds even if $S_{x,y}=\emptyset$. Combining \eqref{fbd1} and \eqref{fbd2}, we get
	\begin{align}\label{fbd3}
	|S_{x,y}|\ee[(\hat{f}_{x,y} - f_{x,y})^2 ] &\le \frac{8mn\theta}{\delta^2} + 8mn\mu_{x,y} + \frac{1}{4}.
	\end{align}
	Using the notation \eqref{mudef}, we see that for any $1\le l\le b+2$, 
	\begin{align*}
	\ee(\mu_{a_l, a_{l+1}}) &= \frac{1}{mn}\sum_{i=1}^m \sum_{j=1}^n[\pp(|m_{ij}-a_l|\le\delta) + \pp(|m_{ij}-a_{l+1}|\le \delta)]\\
	&\le 8b\delta. 
	\end{align*}
	Applying \eqref{fbd3} to the interval $[a_l,a_{l+1})$, taking expectation over the randomness of the $a_l$'s and applying the above inequality, and then summing over $l$, we get
	\begin{align*}
	\frac{1}{mn}\sum_{l=1}^{b+2}|S_{a_l, a_{l+1}}| \ee[(\hat{f}_{a_l,a_{l+1}} - f_{a_l,a_{l+1}})^2 ] &\le \frac{C\theta b}{\delta^2} + C b^2 \delta + \frac{Cb}{mn}. 
	\end{align*}
	Choosing $\delta = (\theta/b)^{1/3}$ gives
	\begin{align*}
	\frac{1}{mn}\sum_{l=1}^{b+2}|S_{a_l, a_{l+1}}| \ee[(\hat{f}_{a_l,a_{l+1}} - f_{a_l,a_{l+1}})^2 ] &\le C\theta^{1/3}b^{5/3} + \frac{Cb}{mn}. 
	\end{align*}
For $x\in [a_l,a_{l+1})$, let
\begin{align*}
\tilde{f}(x) := \frac{1}{|S_{a_l,a_{l+1}}|} \sum_{(i,j)\in S_{a_l, a_l+1}} f(m_{ij})
\end{align*}
Then note that for any $(i,j)\in S_{a_l,a_{l+1}}$, 
\begin{align*}
|\tilde{f}(m_{ij})-f(m_{ij})| &\le \frac{CL}{b}. 
\end{align*}
Since 
\begin{align*}
\frac{1}{mn}\sum_{l=1}^{b+2}\ee[|S_{a_l, a_{l+1}}|(\hat{f}_{a_l,a_{l+1}} - f_{a_l,a_{l+1}})^2 ]  &= \frac{1}{mn}\sum_{i,j} \ee[(\hat{f}(m_{ij}) - \tilde{f}(m_{ij}))^2],
\end{align*}
this completes the proof of the theorem.

\section*{Acknowledgement}
S.~B. thanks Debangan Dey, Samriddha Lahiry, Samyak Rajanala  and Subhabrata Sen for helpful discussions. S.~C.'s research was partially supported by NSF grant DMS-1855484. Lastly, we thank the two anonymous referees for their useful suggestions. 

\bibliographystyle{plainnat}
\bibliography{abc}

\end{document}